\newtheorem{result}{\ }[section]
\theoremstyle{changebreak}                
\newtheorem{thm}[result]{Theorem}
\newtheorem{lem}[result]{Lemma}
\newtheorem{prop}[result]{Proposition}
\newenvironment{proof}
 {{\sl Proof.}\hspace*{1 ex}}%
 {{\nopagebreak\hspace*{\fill}$\Box$\par\vspace{12pt}}}
\newcommand{\transpose}[1]{{#1}^{\top}}
\begin{document}

\thispagestyle{empty}
\begin{center} 

{\LARGE Six mathematical gems from the history of Distance Geometry}
\par \bigskip
{\sc Leo Liberti${}^{1}$, Carlile Lavor${}^2$} 
\par \bigskip
\begin{minipage}{15cm}
\begin{flushleft}
{\small
\begin{itemize}
\item[${}^1$] {\it CNRS LIX, \'Ecole Polytechnique, F-91128 Palaiseau,
  France} \\ Email:\url{liberti@lix.polytechnique.fr}
\item[${}^2$] {\it IMECC, University of Campinas, 13081-970, Campinas-SP, Brazil} \\
  Email:\url{clavor@ime.unicamp.br}
\end{itemize}
}
\end{flushleft}
\end{minipage}
\par \medskip \today
\end{center}
\par \bigskip

\begin{abstract}
  This is a partial account of the fascinating history of Distance
  Geometry. We make no claim to completeness, but we do promise a
  dazzling display of beautiful, elementary mathematics. We prove
  Heron's formula, Cauchy's theorem on the rigidity of polyhedra,
  Cayley's generalization of Heron's formula to higher dimensions,
  Menger's characterization of abstract semi-metric spaces, a result
  of G\"odel on metric spaces on the sphere, and Schoenberg's
  equivalence of distance and positive semidefinite matrices, which is
  at the basis of Multidimensional Scaling. \\
  {\bf Keywords}: Euler's conjecture, Cayley-Menger determinants, Multidimensional scaling, Euclidean Distance Matrix
\end{abstract}

\section{Introduction}
\label{s:intro}
Distance Geometry (DG) is the study of geometry with the basic entity
being distance (instead of lines, planes, circles, polyhedra, conics,
surfaces and varieties). As with most everything else, it all began
with the Greeks, specifically Heron, or Hero, of Alexandria sometime
between 150BC and 250AD, who showed how to compute the area of a
triangle given its side lengths \cite{heron}.

After a hiatus of almost two thousand years, we reach Arthur Cayley's:
the first paper of volume I of his {\it Collected Papers}, dated 1841,
is about the relationships between the distances of five points in
space \cite{cayley1841}. The gist of what he showed is that a
tetrahedron can only exist in a plane if it is flat (in fact, he
discussed the situation in one more dimension). This yields algebraic
relations on the side lengths of the tetrahedron.

Hilbert's influence on foundations and axiomatization was very strong
in the 1930s {\it Mitteleuropa} \cite{hilbert}. This pushed many
people towards axiomatizing existing mathematical theories
\cite{tarskied}. Karl Menger, a young professor of geometry at the
University of Vienna and an attendee of the Vienna Circle, proposed in
1928 a new axiomatization of metric spaces using the concept of
distance and the relation of congruence, and, using an extension of
Cayley's algebraic machinery (which is now known as Cayley-Menger
determinant), generalized Heron's theorem to compute the volume of
arbitrary $K$-dimensional simplices using their side lengths
\cite{menger28}. 

The Vienna Circle was a group of philosophers and mathematicians which
convened in Vienna's {\it Reichsrat caf\'e} around the
nineteen-thirties to discuss philosophy, mathematics and, presumably,
drink coffee. When the meetings became excessively politicized, Menger
distanced himself from it, and organized instead a seminar series,
which ran from 1929 to 1937 \cite{mengerK}. A notable name crops up in
the intersection of Menger's geometry students, the Vienna Circle
participants, and the speakers at Menger's {\it Kolloquium}: Kurt
G\"odel. Most of the papers G\"odel published in the Kolloquium's
proceedings are about logic and foundations,\footnote{The first public
  mention of G\"odel's completeness theorem \cite{goedel_compl} (which
  was also the subject of his Ph.D.~thesis) was given at the {\it
    Kolloquium} \cite[14 May~1930, p.~135]{mengerK}, just three months
  after obtaining his doctorate from the University of Vienna. As for
  his incompleteness theorem \cite{goedel_inc}, F.~Alt recalls
  \cite[Afterword]{mengerK} that G\"odel's seminar \cite[22 Jan.~1931,
    p.~168]{mengerK} appears to have been the first oral presentation
  of its proof:
\begin{quote}
  There was the unforgettable quiet after G\"odel's presentation,
  ended by what must be the understatement of the century: ``That is
  very interesting. You should publish that.'' Then a question: ``You
  use Peano's system of axioms. Will it work for other systems?''
  G\"odel, after a few seconds of thought: ``Yes, any system broad
  enough to define the field of integers.'' Olga Taussky
  (half-smiling): ``The integers do not constitute a field!'' G\"odel,
  who knew this as well as anyone, and had only spoken carelessly:
  ``Well, the\dots the\dots the domain of integrity of the integers.''
  And final relaxing laughter.
\end{quote}
The incompleteness theorem was first mentioned by G\"odel during a
meeting in K\"onigsberg, in Sept.~1930. Menger, who was travelling,
had been notified immediately: with John Von Neumann, he was one of
the first to realize the importance of G\"odel's result, and
began lecturing about it immediately \cite[Biographical
  introduction]{mengerK}.}  but two, dated 1933, are about the
geometry of distances on spheres and surfaces. The first \cite[18
  Feb.~1932, p.~198]{mengerK} answers a question posed at a previous
seminar by Laura Klanfer, and shows that a set $X$ of four points in
any metric space, congruent to four non-coplanar points in
$\mathbb{R}^3$, can be realized on the surface of a three-dimensional
sphere using geodesic distances. The second \cite[17 May~1933,
  p.~252]{mengerK} shows that Cayley's relationship hold locally on
certain surfaces which behave locally like Euclidean spaces.

The pace quickens: in 1935, Isaac Schoenberg published some remarks on
a paper \cite{schoenberg} by Fr\'echet on the {\it Annals of
  Mathematics}, and gave, among other things, an algebraic proof of
equivalence between Euclidean Distance Matrices (EDM) and Gram
matrices. This is almost the same proof which is nowadays given to
show the validity of the classical Multidimensional Scaling (MDS)
technique \cite[\S~12.1]{borg_10}.

This brings us to the computer era, where the historical account ends
and the contemporary treatment begins. Computers allow the efficient
treatment of masses of data, some of which are incomplete and
noisy. Many of these data concern, or can be reduced to, distances,
and DG techniques are the subject of an application-oriented
renaissance \cite{dgp-sirev,dgpbook}. Motivated by the Global
Positioning System (GPS), for example, the old geographical concept of
{\it trilateration} (a system for computing the position of a point
given its distances from three known points) makes its way into DG in
wireless sensor networks \cite{eren04}. W\"uthrich's Nobel Prize for
using Nuclear Magnetic Resonance (NMR) techniques in the study of
proteins brings DG to the forefront of structural bioinformatics
research \cite{wuthrich}. The massive use of robotics in mechanical
production lines requires mathematical methods based on DG
\cite{rojasthomas}.

DG is also tightly connected with graph rigidity \cite{graverbook}.
This is an abstract mathematical formulation of statics, the study of
structures under the action of balanced forces \cite{maxwell1864b},
which is at the basis of architecture \cite{varignon}. Rigidity of
polyhedra gave rise to a conjecture of Euler's \cite{euler} about
closed polyhedral surfaces, which was proved correct only for some
polyhedra: strictly convex \cite{cauchyrigid}, convex and
higher-dimensional \cite{alexandrov3}, and generic\footnote{A
  polyhedron is generic if no algebraic relations on $\mathbb{Q}$ hold
  on the components of the vectors which represent its vertices.}
\cite{gluck}. It was however disproved in general by means of a very
special, non-generic nonconvex polyhedron \cite{connelly-countereg}.

The rest of this paper will focus on the following results, listed
here in chronological order: Heron's theorem (Sect.~\ref{s:heron}),
Euler's conjecture and Cauchy's proof for strictly convex polyhedra
(Sect.~\ref{s:eulercauchy}), Cayley-Menger determinants
(Sect.~\ref{s:cayleymenger}), Menger's axiomatization of geometry by
means of distances (Sect.~\ref{s:menger}), a result by G\"odel's
concerning DG on the sphere (Sect.~\ref{s:goedel}), and Schoenberg's
equivalence (Sect.~\ref{s:schoenberg}) between EDM and Positive
Semidefinite Matrices (PSD). There are many more results in DG: this
is simply our own choice in terms of importance and beauty.

\section{Heron's formula}
\label{s:heron}
Heron's formula, which is usually taught at school, relates the area
$\mathcal{A}$ of a triangle to the length of its sides $a,b,c$ and its
semiperimeter $s=\frac{a+b+c}{2}$ as follows:
\begin{equation}
  \mathcal{A} = \sqrt{s(s-a)(s-b)(s-c)}.
\end{equation}
There are many ways to prove its validity. Shannon Umberger, a student
of the ``Foundations of Geometry I'' course given at the University of
Georgia in the fall of 2000, proposes, as part of his final
project,\footnote{\url{http://jwilson.coe.uga.edu/emt668/emat6680.2000/umberger/MATH7200/HeronFormulaProject/finalproject.html}.}
three detailed proofs: an algebraic one, a geometric one, and a
trigonometric one. John Conway and Peter Doyle discuss Heron's formula
proofs in a publically available email
exchange\footnote{\url{https://math.dartmouth.edu/~doyle/docs/heron/heron.txt}.}
from 1997 to 2001.

Our favourite proof is based on complex numbers, and was
submitted\footnote{\url{http://www.artofproblemsolving.com/Resources/Papers/Heron.pdf}.}
to the ``Art of Problem Solving'' online school for gifted mathematics
students by Miles Edwards\footnote{Also see
  \url{http://newsinfo.iu.edu/news/page/normal/13885.html} and
  \url{http://www.jstor.org/stable/10.4169/amer.math.monthly.121.02.149}
  for more recent career achievements of this gifted student.} when he
was studying at Lassiter High School in Marietta, Georgia.

\begin{thm}[Heron's formula \cite{heron}]
Let  $\mathcal{A}$ be the area of a triangle with side lengths $a,b,c$
and semiperimeter length $s=\frac{1}{2}(a+b+c)$. Then
$\mathcal{A}=\sqrt{s(s-a)(s-b)(s-c)}$.
\label{t:heron}
\end{thm}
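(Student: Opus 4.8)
The plan is to follow the complex-number argument attributed above to Miles Edwards, whose single clever observation reduces Heron's formula to expanding a product of three complex numbers. First I would set up the incircle: let $r$ be the inradius and $I$ the incenter of the triangle with vertices $A,B,C$. Drawing the three segments from $I$ to the vertices splits the triangle into three smaller triangles, each with one side of the original triangle as base and corresponding height $r$, so that $\mathcal{A} = \tfrac12 r(a+b+c) = rs$. It then remains only to compute $r$ in terms of $a,b,c$.

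Next I would record the standard facts about tangent lengths. The two tangent segments from each vertex to the incircle are equal; writing $x,y,z$ for these common lengths at $A,B,C$ respectively, one gets $y+z=a$, $z+x=b$, $x+y=c$, hence $x=s-a$, $y=s-b$, $z=s-c$ and $x+y+z=s$. Moreover $IA$ bisects the angle at $A$, and in the right triangle formed by $I$, $A$ and the point where the incircle touches $AB$, the half-angle $\alpha$ at $A$ satisfies $\tan\alpha = r/x$; likewise $\tan\beta = r/y$ and $\tan\gamma = r/z$ at $B$ and $C$. Since the full angles of the triangle sum to $\pi$, the half-angles satisfy $\alpha+\beta+\gamma=\tfrac{\pi}{2}$.

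The crux — and the one genuinely inspired step — is then to consider the complex number
\[
  (x+ir)(y+ir)(z+ir).
\]
For each $t\in\{x,y,z\}$ the factor $t+ir$ has positive real part and argument $\arctan(r/t)$, which equals $\alpha$, $\beta$, $\gamma$ respectively; hence the product has argument $\alpha+\beta+\gamma=\tfrac{\pi}{2}$ and is purely imaginary. Expanding and using $i^2=-1$, its real part equals $xyz - r^2(x+y+z)$. Setting this to zero gives $r^2(x+y+z)=xyz$, i.e.\ $r^2 s=(s-a)(s-b)(s-c)$, so $r^2=(s-a)(s-b)(s-c)/s$. Combining with $\mathcal{A}=rs$ yields $\mathcal{A}^2 = r^2 s^2 = s(s-a)(s-b)(s-c)$, and taking square roots finishes the proof. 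I expect the only real obstacle to be psychological rather than computational: one must think to introduce that particular triple product; once it is written down, the rest is a one-line expansion together with the elementary incircle bookkeeping above.
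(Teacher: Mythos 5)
Your proposal is correct and is essentially the paper's own proof (the Edwards complex-number argument): the identity $r^2(x+y+z)=xyz$ extracted from a triple product of complex numbers, combined with $\mathcal{A}=rs$. The only cosmetic difference is that you take arguments at the vertices (half-angles summing to $\tfrac{\pi}{2}$, product purely imaginary) where the paper takes the angles at the incenter (summing to $\pi$, product $(r+ix)(r+iy)(r+iz)$ real and negative); the two computations are conjugate versions of one another and yield the same relation.
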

\begin{proof}\cite{milesedwards}
Consider a triangle with sides $a,b,c$ (opposite to the vertices $A,B,C$
respectively) and its inscribed circle centered at $O$ with radius
$r$. The perpendiculars from $O$ to the triangle sides split $a$ into
$y,z$, $b$ into $x,z$ and $c$ into $x,y$ as shown in
Fig.~\ref{f:heron1}. Let $u,v,w$ be the segments joining $O$ with
$A,B,C$, respectively.
\begin{figure}[!ht]
  \begin{center}
    \psfrag{A}{$A$}
    \psfrag{B}{$B$}
    \psfrag{C}{$C$}
    \psfrag{al}{$\alpha$}
    \psfrag{be}{$\beta$}
    \psfrag{ga}{$\gamma$}
    \psfrag{x}{$x$}
    \psfrag{y}{$y$}
    \psfrag{z}{$z$}
    \psfrag{u}{$u$}
    \psfrag{v}{$v$}
    \psfrag{w}{$w$}
    \psfrag{O}{$O$}
    \psfrag{r}{$r$}
    \psfrag{a}{$a$}
    \psfrag{b}{$b$}
    \psfrag{c}{$c$}
    \includegraphics[width=12cm]{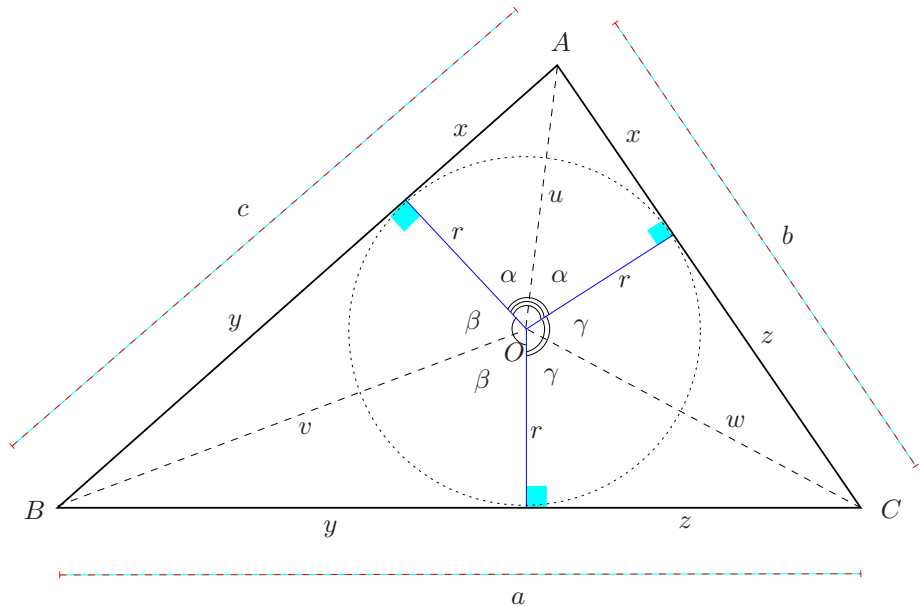}
  \end{center}
  \caption{Heron's formula: a proof using complex numbers.}
  \label{f:heron1}
\end{figure}
First, we note that $2\alpha+2\beta+2\gamma=2\pi$, which implies
$\alpha+\beta+\gamma=\pi$. Next, the following complex
identities are easy to verify geometrically in Fig.~\ref{f:heron1}:
\begin{eqnarray*}
  r + ix &=& u e^{i\alpha} \\
  r + iy &=& v e^{i\beta} \\
  r + iz &=& w e^{i\gamma}.
\end{eqnarray*}
These imply:
\begin{equation*}
  (r + ix)(r+iy)(r+iz) = (uvw) e^{i(\alpha+\beta+\gamma)} = uvw
  e^{i\pi} = -uvw,
\end{equation*}
where the last step uses Euler's identity $e^{i\pi}+1=0$ \cite[I-VIII,
  \S~138-140, p.~148]{euleraninf}. Since $-uvw$ is real, the imaginary
part of $(r+ix)(r+iy)(r+iz)$ must be zero. Expanding the product and
rearranging terms, we get $r^2(x+y+z)=xyz$. Solving for $r$, we have
the nonnegative root
\begin{equation}
r = \sqrt{\frac{xyz}{x+y+z}}. \label{eqr}
\end{equation}
We can write the semiperimeter of the triangle $ABC$ as
$s=\frac{1}{2}(a+b+c)=\frac{1}{2}(y+z+x+z+x+y)=x+y+z$. Moreover,
\begin{eqnarray*}
  s-a &=& x+y+z-y-z = x \\
  s-b &=& x+y+z-x-z = y \\
  s-c &=& x+y+z-x-y = z,
\end{eqnarray*}
so $xyz=(s-a)(s-b)(s-c)$, which implies that Eq.~\eqref{eqr} becomes:
\begin{equation*}
 r = \sqrt{\frac{(s-a)(s-b)(s-c)}{s}}.
\end{equation*}
We now write the area $\mathcal{A}$ of the triangle $ABC$ by summing it over the
areas of the three triangles $AOB$, $BOC$, $COA$, which yields:
\begin{equation*}
  \mathcal{A} =
  \frac{1}{2}(ra+rb+rc)=r\frac{a+b+c}{2}=rs=\sqrt{s(s-a)(s-b)(s-c)},
\end{equation*}
as claimed.
\end{proof}

\section{Euler's conjecture and the rigidity of polyhedra}
\label{s:eulercauchy}
Consider a square with unit sides, in the plane. One can shrink two
opposite angles and correspondingly widen the other two to obtains a
rhombus (see Fig.~\ref{f:sqrh}), which has the same side lengths but a
different shape: no sequence of rotations, translations or reflections
can turn one into the other. In other words, a square is {\it
  flexible}. By contrast, a triangle is not flexible, or {\it rigid}.
\begin{figure}[!ht]
  \begin{center}
    \includegraphics[width=8cm]{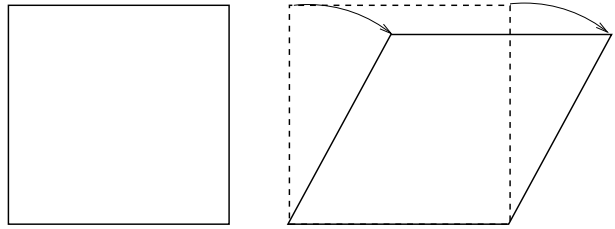}
  \end{center}
  \caption{A square is flexed into a rhombus. The set of faces (the
    edges) are the same, and each maintains pairwise distances through
    the flexing, i.e.~two points on the same edge have the same
    distance on the left as on the right figure.}
  \label{f:sqrh}
\end{figure}

Euler conjectured in 1766 \cite{euler1766} that all three-dimensional
polyhedra are rigid. The conjecture appears at the end of the
discussion about the problem {\it Invenire duas superficies, quarum
  alteram in alteram transformare liceat, ita ut in utraque singula
  puncta homologa easdem inter se teneat distantias}, i.e.:
\begin{quote}
To find two surfaces for which it is possible to transform one into
the other, in such a way that corresponding points on either keep the
same pairwise distance. ($\dag$)
\end{quote}
Towards the end of the paper, Euler writes {\it Statim enim atque
  figura undique est clausa, nullam amplius mutationem patitur}, which
means ``As soon as the shape is everywhere closed, it can no longer be
transformed''. Although the wording appears ambiguous by today's
standards, scholars of Euler and rigidity agree: what Euler really
meant is that 3D polyhedra are rigid \cite{gluck}.

To better understand this statement, we borrow from \cite{alexandrov2}
the precise definition of a {\it polyhedron}\footnote{This definition
  is different from the usual definition employed in convex analysis,
  i.e.~that a polyhedron is an intersection of half-spaces; however, a
  convex polyhedron in the sense given here is the same as a polytope
  in the sense of convex analysis.}: a family $\mathcal{K}$ of points,
open segments and open triangles is a {\it triangulation} if (a) no
two elements of $\mathcal{K}$ have common points, and (b) all sides
and vertices of the closure of any triangle of $\mathcal{K}$, and both
extreme points of the closure of any segment of $\mathcal{K}$ are all
in $\mathcal{K}$ themselves. Given a triangulation $\mathcal{K}$ in
$\mathbb{R}^K$ (where $K\in\{1,2,3\}$), the union of all points of
$\mathcal{K}$ with all points in the segments and triangles of
$\mathcal{K}$ is called a {\it polyhedron}. Note that several
triangular faces can belong to the same affine space, thereby forming
polygonal faces.

Each polyhedron has an incidence structure of points on segments and
segments on polygonal (not necessarily triangular) faces, which
induces a partial order (p.o.) based on set inclusion. For example,
the closure of the square $ABCD$ contains the closures of the segments
$AB$, $BC$, $CD$, $DA$, each of which contains the corresponding
adjacent points $A,B$, $B,C$, $C,D$, $D,A$. Accordingly, the p.o.~is
$A\subset AB,DA$; $B\subset AB,BC$; $C\subset BC,CD$; $D\subset
CD,DA$; $AB,BC,CD,DA\subset ABCD$. Since this p.o.~also has a bottom
element (the empty set) and a top element (the whole polyhedron), it
is a {\it lattice}. A lattice isomorphism is a bijective mapping
between two lattices which preserves the p.o. Two polyhedra $P,Q$ are
{\it combinatorially equivalent} if their triangulations are lattice
isomorphic. If, moreover, all the lattice isomorphic polygonal faces
of $P,Q$ are exactly equal, the polyhedra are said to be {\it facewise
  equal}.

Under the above definition, nothing prevents a polyhedron from being
nonconvex (see Fig.~\ref{f:nonconvex}). It is known that every closed
surface, independently of the convexity of its interior, is
homeomorphic (intuitively: smoothly deformable in) to some polyhedron
(again \cite[\S~2.2]{alexandrov2}). This is why we can replace
``surface'' with ``polyhedra''.
\begin{figure}[!ht]
  \begin{center}
    \includegraphics[width=5cm]{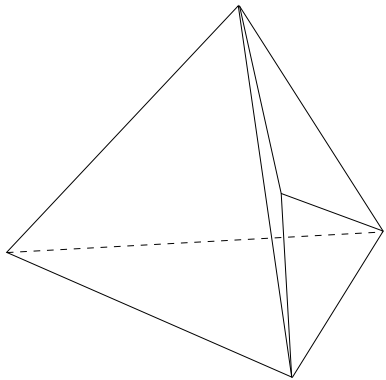}
  \end{center}
  \caption{A nonconvex polyhedron.}
  \label{f:nonconvex}
\end{figure}

The ``rigidity'' implicit in Euler's conjecture should be taken to
mean that no point of the polyhedron can undergo a continuous motion
under the constraint that the shape be the same at each point of the
motion. As for the concept of ``shape'', it is linked to that of
distance, as appears clear from ($\dag$). The following is therefore a
formal restatement of Euler's conjecture: {\it two combinatorially
  equivalent facewise equal polyhedra must be isometric under the
  Euclidean distance}, i.e.~each pair of points in one polyhedron is
equidistant with the corresponding pair in the other.

A natural question about the Euler conjecture stems from generalizing
the example in Fig.~\ref{f:sqrh} to 3D (see Fig.~\ref{f:sqrh3}). Does
this not disprove the conjecture?
\begin{figure}[!ht]
  \begin{center}
    \includegraphics[width=10cm]{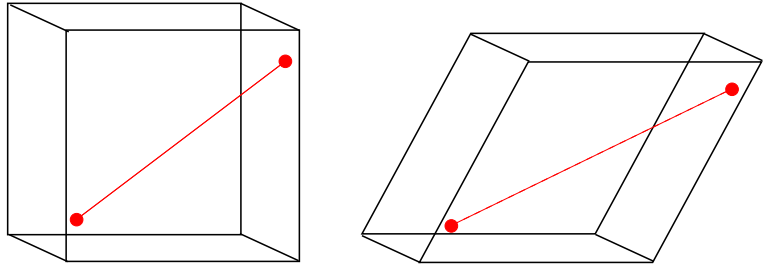}
  \end{center}
  \caption{A cube can be transformed into a rhomboid, but the set of
    faces is not the same anymore (accordingly, corresponding point
    pairs may not preserve their distance, as shown).}
  \label{f:sqrh3}
\end{figure}
The answer is no: all the polygonal faces in the cube are squares, but
this does not hold in the rhomboid. The question is more complicated
than it looks at first sight, which is why it took 211 years to
disprove it.

\subsection{Strictly convex polyhedra: Cauchy's proof}
Although Euler's conjecture is false in general, it is true for many
important subclasses of polyhedra. Cauchy proved it true for strictly
convex polyhedra.\footnote{In fact Cauchy's proof contained two
  mistakes, corrected by Steinitz \cite[p.~67]{lyusternik} and
  Lebesgue.}  There are many accounts of Cauchy's proof: Cauchy's
original text, still readable today \cite{cauchyrigid}; Alexandrov's
book \cite{alexandrov3}, Lyusternik's book \cite[\S~20]{lyusternik},
Stoker's paper \cite{stoker}, Connelly's chapter \cite{connellych}
just to name a few. Here we follow the treatment given by Pak
\cite{pak}.

We consider two combinatorially equivalent, facewise equal strictly
convex polyhedra $P,Q$, and aim to show that $P$ and $Q$ are
isometric.

For a polyhedron $P$ we consider its associated graph $G(P)=(V,E)$,
where $V$ are the points of $P$ and $E$ its segments. Note that $G(P)$
only depends on the incidence structure of the polygonal faces,
segments and points of $P$. Since $P,Q$ are combinatorially
equivalent, $G(P)=G(Q)$. Consider the dihedral angles\footnote{Two
  half-planes in $\mathbb{R}^3$ intersecting on a line $L$ define an
  angle smaller than $\pi$ called the {\it dihedral angle} at $L$.}
$\alpha_{uv},\beta_{uv}$ on $P,Q$ induced by the segment represented
by the edge $\{u,v\}\in E$. We assign to each edge $\{u,v\}\in E$ a
label $\ell_{uv}=\mbox{sgn}(\beta_{uv}-\alpha_{uv})$ (so
$\ell_{uv}\in\{-1,0,1\}$), and consider, for each $v\in V$, the edge
sequence $\sigma_v=(\{u,v\}\;|\;u\in N(v))$, where $N(v)$ is the set
of nodes adjacent to $v$. The order of the edges in $\sigma_v$ is
given by any circuit around the polygon $p(v)$ obtained by
intersecting $P$ with a plane $\gamma$ which separates $v$ from the other
vertices in $V$ (this is possible by strict convexity, see
Fig.~\ref{f:sephyp}).
\begin{figure}[!ht]
  \begin{center}
    \psfrag{v}{$v$}
    \psfrag{A}{$w_1$}
    \psfrag{B}{$w_2$}
    \psfrag{C}{$w_3$}
    \psfrag{D}{$w_4$}
    \psfrag{u1}{$u_1$}
    \psfrag{u2}{$u_2$}
    \psfrag{u3}{$u_3$}
    \psfrag{u4}{$u_4$}
    \psfrag{L}{$L$}
    \psfrag{gamma}{$\gamma$}
    \psfrag{p(v)}{$p(v)$}
    \includegraphics[width=9cm]{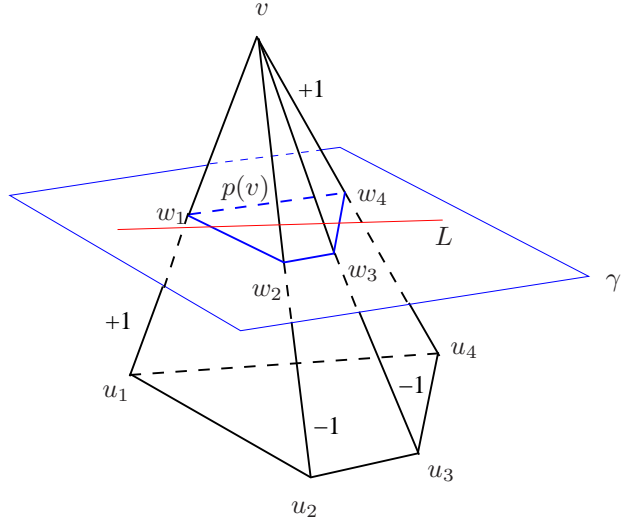}
  \end{center}
  \caption{The plane $\gamma$ separating $v$ from the other vertices in
    the strictly convex polyhedron $P$, and the intersection polygon
    $p(v)$ defined by $w_1,\ldots,w_4$. The line $L$ (lying in $p(v)$)
    separates the $+1$ and $-1$ labels applied to the points
    $w_1,\ldots,w_4$ of intersections between the edges of $P$ and
    $p(v)$.}
  \label{f:sephyp}
\end{figure}
It is easy to see that every edge $\{u,v\}\in \sigma_v$ corresponds to
a vertex of $p(v)$. Therefore, a circuit over $p(v)$ defines an order
over $\sigma_v$. We also assume that this order is periodic, i.e.~its
last element precedes the first one.  Any such sequence $\sigma_v$
naturally induces a sign sequence
$s_V=(\ell_{uv}\;|\;\{u,v\}\in\sigma_v)$; we let $\bar{s}_v$ be the
sequence $s_v$ without the zeros, and we count the number $m_v$ of
sign changes in $\bar{s}_v$, including the sign change occurring
between the last and first elements.
\begin{lem}
  For all $v\in V$, $m_v$ is even.
  \label{l:even}
\end{lem}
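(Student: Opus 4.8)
The plan is to notice that Lemma~\ref{l:even} is purely combinatorial and has nothing to do with polyhedra: $\bar s_v$ is a \emph{cyclic} sequence with entries in $\{-1,+1\}$ (it is indexed by a circuit around the polygon $p(v)$, and the ``sign change between the last and first elements'' is precisely the cyclic wrap-around), and the assertion is simply that the number of places where two cyclically consecutive entries differ is even. First I would dispose of the degenerate cases: if $\bar s_v$ is empty or has a single entry then $m_v=0$, which is even. So assume $\bar s_v=(t_1,\dots,t_k)$ with $t_i\in\{-1,+1\}$, $k\ge 2$, and indices read modulo $k$.

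Next I would argue as follows. By definition $m_v=\#\{\,i\in\{1,\dots,k\}\mid t_i t_{i+1}=-1\,\}$, since $t_i t_{i+1}=-1$ exactly when consecutive entries have opposite sign and $t_i t_{i+1}=+1$ otherwise. Now consider the product of all these consecutive factors around the cycle: $\prod_{i=1}^{k} t_i t_{i+1} = \prod_{i=1}^{k} t_i^2 = 1$, the telescoping being exact because the product is cyclic. On the other hand each factor is $\pm 1$ and exactly $m_v$ of them equal $-1$, so $\prod_{i=1}^{k} t_i t_{i+1} = (-1)^{m_v}$. Hence $(-1)^{m_v}=1$, i.e.\ $m_v$ is even. (Intuitively: walking once around the cycle one flips the running sign $m_v$ times and must return to the starting value, so the number of flips is even.)

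The only point that needs a word of care is the passage from the raw label sequence $s_v$ to the zero-free sequence $\bar s_v$: deleting an entry equal to $0$ from a cyclic sequence either leaves the number of sign changes among the surviving entries unchanged or merges two equal-sign blocks into one (lowering it), but never creates a new sign change, so working with $\bar s_v$ is legitimate. In fact the cleanest exposition is to bypass $s_v$ altogether and run the product argument above directly on $\bar s_v$. I expect no real obstacle here; the lemma is elementary, and its only subtlety is making explicit that the labels around $v$ genuinely form a cyclic sequence — which is exactly what the circuit around $p(v)$ provides.
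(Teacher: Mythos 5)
Your proof is correct, and it takes a different route from the paper's. The paper argues by contradiction: it assumes $m_v$ is odd, treats the base case $m_v=1$ by observing that following the periodic sequence around from the first nonzero label back to itself through a single sign change would force that label to equal its own negative, and then dismisses the remaining odd values with ``a trivial induction step.'' Your telescoping-product argument --- writing $m_v=\#\{i : t_it_{i+1}=-1\}$ for the cyclic sequence $\bar{s}_v=(t_1,\dots,t_k)$ and computing $(-1)^{m_v}=\prod_i t_it_{i+1}=\prod_i t_i^2=1$ --- is direct, needs no induction, and handles all cases at once, including the degenerate ones $k\le 1$. Both proofs rest on the same underlying fact (traversing a closed cycle returns the running sign to its starting value), but yours makes that fact into a one-line identity, whereas the paper's version leaves the induction step to the reader and is worded somewhat informally. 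Your closing remark about passing from $s_v$ to $\bar{s}_v$ is careful but strictly unnecessary here, since the paper already \emph{defines} $m_v$ as the number of sign changes in the zero-free sequence $\bar{s}_v$; the observation that deleting zeros cannot create sign changes only matters if one wanted to compare $m_v$ with a count taken on $s_v$ itself.
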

\begin{proof}
Suppose $m_v$ is odd, and proceed by induction on $m_v$: if $m_v=1$,
then there is only one sign change. So, the first edge $\{u,v\}$ in
$\sigma_v$ to be labelled with $\ell_{uv}\not=0$ has the property
that, going around the periodic sequence with only one sign change,
$\{u,v\}$ is also labelled with $-\ell_{uv}$, which yields $+1=-1$, a
contradiction. A trivial induction step yields the same contradiction
for all odd $m_v$.
\end{proof}
We now state a fundamental technical lemma, and provide what is
essentially Cauchy's proof, rephrased as in \cite[Lemma 2 in
  \S~20]{lyusternik}.
\begin{lem}
  If $P$ is strictly convex, then for each $v\in V$ we have either
  $m_v=0$ or $m_v\ge 4$.\label{l:signchange}
\end{lem}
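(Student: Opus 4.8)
The plan is to replace the three–dimensional picture at the vertex $v$ by a two–dimensional one and then to apply \emph{Cauchy's arm lemma}. Fix $v\in V$ (it has degree at least $3$, so $\sigma_v$ has at least three edges). Intersecting $P$ with a small sphere $S$ centred at $v$ produces a convex spherical polygon $\Pi$ — convex because $P$ is strictly convex — whose vertices $w_1,\dots,w_n$ are the points where the edges of $\sigma_v$ pierce $S$, taken in the cyclic order of $\sigma_v$; the side $w_iw_{i+1}$ of $\Pi$ has length equal to the planar angle at $v$ of the face of $P$ lying between the two corresponding edges, and the interior angle of $\Pi$ at $w_i$ equals the dihedral angle $\alpha_{u_iv}$. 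The same construction on $Q$ gives a convex spherical polygon $\Pi'$ with vertices $w_1',\dots,w_n'$; since $P$ and $Q$ are facewise equal the corresponding faces are congruent, so $\Pi$ and $\Pi'$ have \emph{equal} corresponding side lengths, while the interior angle of $\Pi'$ at $w_i'$ is $\beta_{u_iv}$, and $\ell_{u_iv}=\mathrm{sgn}(\beta_{u_iv}-\alpha_{u_iv})$. The lemma thus reduces to the following: for two convex spherical polygons with the same cyclic list of side lengths, the cyclic sequence of signs of their angle differences has either $0$ or at least $4$ sign changes.

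The tool is Cauchy's arm lemma: if $(p_0,\dots,p_k)$ and $(q_0,\dots,q_k)$ are convex spherical polygonal arcs with $|p_{i-1}p_i|=|q_{i-1}q_i|$ for all $i$, and the interior angle at $q_i$ is at least the interior angle at $p_i$ for $1\le i\le k-1$, then $|p_0p_k|\le|q_0q_k|$, with equality exactly when the two arcs are congruent, i.e.\ when all $k-1$ interior angles coincide. This is the genuinely hard ingredient; it admits a classical induction on $k$ and I will take it as known.

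By Lemma~\ref{l:even} it is enough to rule out $m_v=2$. Suppose $m_v=2$: then the nonzero labels, read cyclically, form one nonempty block of $+1$'s followed by one nonempty block of $-1$'s. Let $w_a$ be the last $+1$ of its block and $w_b$ the last $-1$ of its block, and cut $\Pi$ (and $\Pi'$) at $w_a$ and $w_b$ into the arc $\mathcal{X}$ running from $w_b$ through the $+$ block to $w_a$ and the complementary arc $\mathcal{Y}$ running from $w_a$ through the $-$ block to $w_b$. Then every interior vertex of $\mathcal{X}$ carries a label $\ge 0$, every interior vertex of $\mathcal{Y}$ a label $\le 0$, and whenever the $+$ block (resp.\ the $-$ block) has at least two elements, the \emph{first} $+1$ (resp.\ $-1$) lies strictly inside $\mathcal{X}$ (resp.\ $\mathcal{Y}$). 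Let $D,D'$ be the lengths of the diagonal $w_aw_b$ in $\Pi,\Pi'$. The arm lemma applied to $\mathcal{X}$ (angles nondecreasing from $P$ to $Q$) gives $D\le D'$; applied to $\mathcal{Y}$ read from $Q$ to $P$ it gives $D'\le D$. Hence $D=D'$, equality holds in both applications, and every interior angle of $\mathcal{X}$ and of $\mathcal{Y}$ must agree in $\Pi$ and $\Pi'$; equivalently $\ell_{u_iv}=0$ for all $i\notin\{a,b\}$. If either block has more than one element this contradicts the presence of a nonzero label interior to the matching arc. The only case left is a single $+1$ (at $w_a$) and a single $-1$ (at $w_b$); then both $\mathcal{X}$ and $\mathcal{Y}$ have all their interior angles unchanged, hence each is congruent to its primed copy, so the diagonal $w_aw_b$ splits the angle of $\Pi'$ at $w_a'$ into exactly the same two pieces as it splits the angle of $\Pi$ at $w_a$, giving $\beta_{u_av}=\alpha_{u_av}$ and contradicting $\ell_{u_av}=+1$. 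Therefore $m_v\neq 2$, and with Lemma~\ref{l:even} we obtain $m_v=0$ or $m_v\ge 4$.

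The step I expect to be the main obstacle is the arm lemma itself: the statement is intuitively obvious, but a rigorous proof must guarantee that the arc stays convex and non-self-intersecting as the angles are opened — exactly the point at which Cauchy's original argument was incomplete and had to be repaired (by Steinitz, as noted above). A secondary, bookkeeping difficulty is handling the zero labels and the small degenerate configurations ($n=3$, or a single $+1$ and a single $-1$), which is why the cut above is taken at the last labelled vertex of each block and those small cases are isolated.
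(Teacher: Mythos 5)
Your proof is correct and, at bottom, follows the same strategy as the paper's: reduce the three-dimensional situation at $v$ to a convex polygon surrounding $v$, use the hypothesis of exactly two sign changes to split that polygon into a positive arc and a negative arc, and derive a contradiction from the length of the chord joining the two cut points. The differences are in execution, and they are instructive. The paper works with the planar cross-section $p(v)$ and simply asserts that the separating segment $\bar{L}$ ``strictly increases'' on the $+1$ side while ``strictly decreasing'' on the $-1$ side --- precisely the step that the paper's own footnote admits was flawed in Cauchy's original argument and had to be repaired by Steinitz. You instead pass to the spherical link of $v$ (so that the side lengths of the polygon become the face angles at $v$, which are preserved by facewise equality, and its interior angles become the dihedral angles $\alpha_{uv},\beta_{uv}$) and invoke Cauchy's arm lemma explicitly, \emph{including its equality case}. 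This buys two things the paper's sketch does not deliver: a correct treatment of the zero labels (the claim that $\bar{L}$ ``strictly'' increases is unjustified when one arc carries its only nonzero label at an endpoint; your equality-case argument, cutting at the \emph{last} label of each block and then ruling out the singleton-block configuration by congruence of the two sub-arcs, handles exactly these degenerate cases), and a clean isolation of the one genuinely hard ingredient --- the arm lemma --- which you state precisely and assume without proof. Since the paper likewise leaves the corresponding monotonicity claim unproved, your argument is at least as rigorous as the one it replaces, and more honest about where the real difficulty lies.
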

\begin{proof}
By Lemma \ref{l:even}, for each $v\in V$ we have $m_v\not\in\{1,3\}$,
so we aim to show that $m_v\not=2$. Suppose, to get a contradiction,
that $m_v=2$, and consider the polygon $p(v)$ as in
Fig.~\ref{f:sephyp}. By the correspondence between edges in $\sigma_v$
and vertices of $p(v)$, the labels $\ell_{uv}$ are vertex labels in
$p(v)$.  Since there are only two sign changes, the sequence of vertex
labels can be partitioned in two contiguous sets of $+1$ and $-1$
(possibly interspersed by zeros). By convexity, there exists a line
$L$ separating the $+1$ and the $-1$ vertices (see
Fig.~\ref{f:sephyp}). Since all of the angles marked $+1$ strictly
increase, the segment $\bar{L}=L\cap p(v)$ also strictly
increases\footnote{This statement was also proved in Cauchy's paper
  \cite{cauchyrigid}, but this proof contained a serious flaw, later
  corrected by Steinitz.}; but, at the same time, all of the angles
marked $-1$ strictly decrease, so the segment $\bar{L}$ also
strictly decreases, which means that the same segment $\bar{L}$
both strictly increases and decreases, which is a contradiction (see
Fig.~\ref{s:incrdecr}).
\end{proof}
\begin{figure}[!ht]
  \begin{center}
    \psfrag{+}{\scriptsize $+1$}
    \psfrag{-}{\scriptsize $-1$}
    \psfrag{L}{\scriptsize $\bar{L}$}
    \includegraphics[width=4cm]{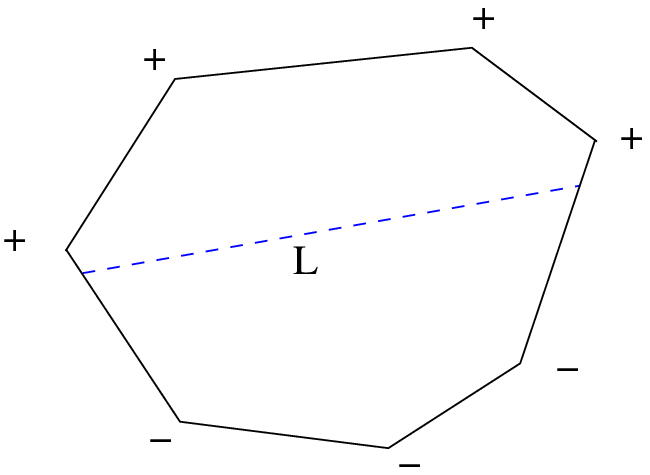} \hfill
    \psfrag{+}{\scriptsize $+1$}
    \psfrag{-}{\scriptsize $-1$}
    \psfrag{L}{\scriptsize $\bar{L}$}
    \includegraphics[width=4cm]{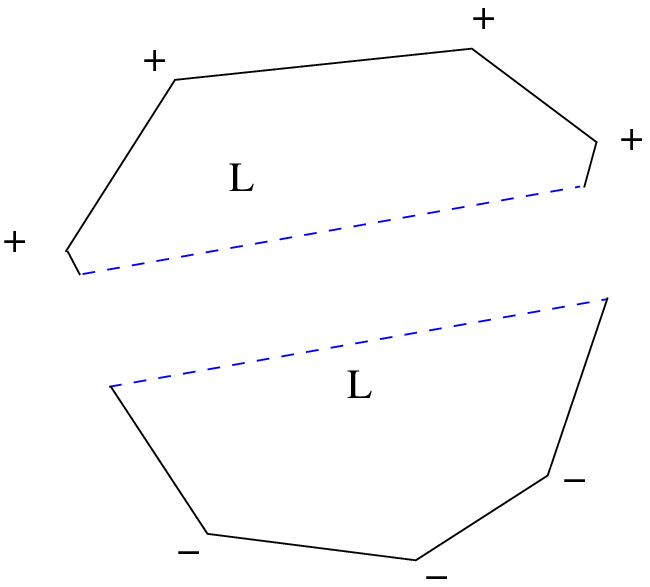} \hfill
    \psfrag{+}{\scriptsize $+1$}
    \psfrag{-}{\scriptsize $-1$}
    \psfrag{L}{\scriptsize $\bar{L}$}
    \includegraphics[width=4cm]{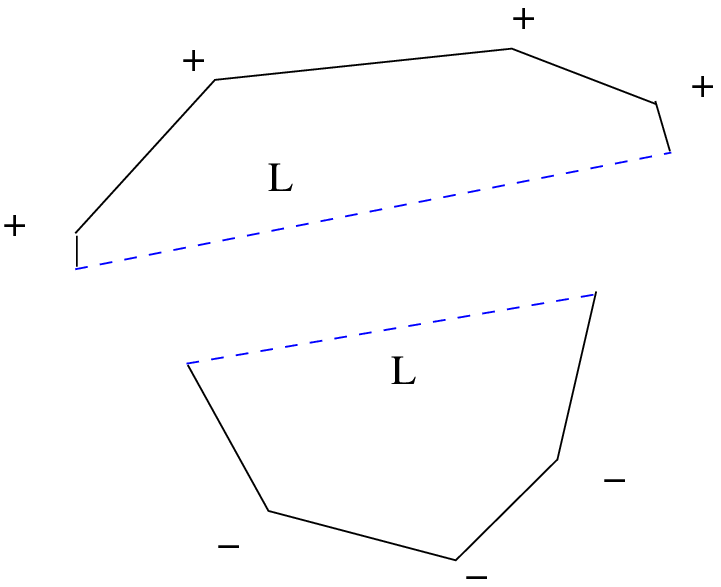}
  \end{center}
\caption{Visual representation of the contradiction in the proof of
  Lemma \ref{l:signchange}. The angles at all vertices labelled $+1$
  increase their magnitude, and those at $-1$ decrease: it follows
  that $\bar{L}$ both increases and decreases its length, a contradiction.}
\label{s:incrdecr}
\end{figure}
\begin{thm}[Cauchy's Theorem \cite{cauchyrigid}]
  If two closed convex polyhedra $P,Q$ are combinatorially equivalent
  and facewise equal, they are isometric.
\end{thm}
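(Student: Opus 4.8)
The plan is to argue by contradiction, using Lemmas~\ref{l:even} and~\ref{l:signchange} as the local ingredient and Euler's polyhedral formula as the global one. First I would note that ``facewise equal'' pins down the shape of every polygonal face of $P$ and $Q$, so the only remaining freedom lies in the dihedral angles; hence if $P$ and $Q$ were \emph{not} isometric some dihedral angle would differ, and the marked edge set $E^{\ast}=\{\,e=\{u,v\}\in E : \ell_{uv}\neq 0\,\}$ would be nonempty. Let $V^{\ast}$ be the set of endpoints of edges in $E^{\ast}$ and $G^{\ast}=(V^{\ast},E^{\ast})$ the corresponding subgraph of $G(P)=G(Q)$; since $P$ is convex, $G(P)$ embeds in the sphere $S^{2}$ (central projection from an interior point), with its faces being the polygonal faces of $P$, and $G^{\ast}$ inherits this spherical embedding. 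Deleting the unmarked edges does not alter the cyclic order in which the marked edges occur around any $v\in V^{\ast}$, so the quantity $m_{v}$ from Lemma~\ref{l:signchange} is exactly the number of sign changes one reads off while circling $v$ inside $G^{\ast}$.

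Next I would upgrade Lemma~\ref{l:signchange} to the form actually needed: for every $v\in V^{\ast}$ one has $m_{v}\ge 4$. Lemma~\ref{l:even} gives that $m_{v}$ is even, Lemma~\ref{l:signchange} rules out $m_{v}=2$, and the only extra point is to exclude $m_{v}=0$ at a marked vertex. But $m_{v}=0$ would mean that all marked edges at $v$ carry the same label, say $+1$; then in $Q$ the intersection polygon $p(v)$ has all of its changed angles strictly opened and none closed relative to $P$, so by the very monotonicity (``arm-lemma'') argument already used in the proof of Lemma~\ref{l:signchange} the polygon $p(v)$ could not close up --- a contradiction. In particular every vertex of $G^{\ast}$ has degree at least $m_{v}\ge 4$, so $G^{\ast}$ is nontrivial and each of its faces is bounded by a closed walk of length $\ge 3$.

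The global step is a double count of the number $N$ of \emph{sign-change corners} of $G^{\ast}$, i.e.\ of pairs consisting of a vertex together with two consecutive marked edges around it carrying opposite labels. Counting by vertices, $N=\sum_{v\in V^{\ast}}m_{v}\ge 4\,|V^{\ast}|$. Counting instead by faces, each corner lies in exactly one face of $G^{\ast}$, and around a face bordered by $k$ edges there are at most $2\lfloor k/2\rfloor$ sign changes, so $N\le\sum_{f}2\lfloor k_{f}/2\rfloor$ with $k_{f}\ge 3$ and $\sum_{f}k_{f}=2\,|E^{\ast}|$. Writing $V=|V^{\ast}|$, $E=|E^{\ast}|$, $F=\#\text{faces}$, Euler's formula $V-E+F\ge 2$ gives $4V\ge 8+2\sum_{f}(k_{f}-2)$, and comparing with the face bound yields
\begin{equation*}
8\ \le\ \sum_{f}\bigl(4-k_{f}-\varepsilon_{f}\bigr),\qquad \varepsilon_{f}=1\ \text{if}\ k_{f}\ \text{is odd, else}\ 0,
\end{equation*}
whose right-hand side is $0$ for triangular and quadrilateral faces and strictly negative for every larger face, hence at most $0$. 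This is absurd, so $E^{\ast}=\varnothing$ and $P,Q$ are isometric.

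I expect the genuine obstacle to be the local lemma $m_{v}\ge 4$ --- concretely, making the ``arm-lemma'' monotonicity of the chord $\bar{L}$ (the step flagged in the footnotes as the gap Steinitz repaired) airtight, together with the $m_{v}=0$ case that the statement of Lemma~\ref{l:signchange} leaves open; this is the assertion that a convex (spherical) polygon with prescribed side lengths cannot have all of its angles moved weakly in one direction while remaining closed. Everything on the global side is short and robust. One secondary technicality is that $G^{\ast}$ need not be $2$-connected, so a priori its faces are bounded by closed walks rather than simple cycles; restricting to the $2$-connected blocks of $G^{\ast}$ (equivalently, applying Euler's formula component by component) makes the faces genuine polygons and causes no difficulty, since $\sum_{f}k_{f}=2E$ and $k_{f}\ge 3$ are exactly what the count uses.
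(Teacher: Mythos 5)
Your argument is correct, and at its core it is the same sign-counting scheme the paper uses: label each edge by the sign of the dihedral-angle difference, bound $\sum_v m_v$ from below by the local Lemma~\ref{l:signchange} and from above by a per-face count combined with Euler's formula. The difference is one of scope. The paper deliberately proves only the base case of Eq.~\eqref{eq:ct} (every vertex has $m_v>0$, or every vertex has $m_v=0$, with $G(P)$ connected) and refers to Pak for the rest; you prove the general case by passing to the subgraph $G^{\ast}$ of marked edges and running the vertex/face double count there, which is the standard complete argument. This costs you two ingredients the paper's lemmas do not literally supply. First, Lemma~\ref{l:signchange} as stated and proved still permits $m_v=0$ at a vertex some of whose labels are nonzero, so you need the extra arm-lemma step ruling out a vertex all of whose nonzero labels agree; you identify this correctly, and your sketch (all changed angles of $p(v)$ open, so the polygon cannot close) is the right argument, but note it is a separate application of the monotonicity principle, not a corollary of the two-sign-change case proved in the paper --- indeed the paper's own base-case claim that ``$m_v=0$ for all $v$ implies all dihedral angles agree'' tacitly relies on this same strengthening. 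Second, Euler's formula must be applied to $G^{\ast}$, which need not be connected or $2$-connected; your observation that the inequality form of Euler's relation and the boundary-walk count $\sum_f k_f = 2|E^{\ast}|$ with $k_f\ge 3$ survive (minimum degree $4$ in $G^{\ast}$ helps) is exactly what makes this harmless. What the paper's restriction buys is brevity; what your version buys is a proof of the theorem as actually stated rather than of a special case, at the price of having to make the $m_v=0$ exclusion airtight.
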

We only present the proof of the base case where
\begin{equation}
  \forall v\in V\;(m_v>0) \quad \vee \quad \forall v\in
  V\;(m_v=0) \label{eq:ct} 
\end{equation}
and $G(P)=G(Q)$ is a connected graph, and refer the reader to
\cite[p.~251]{pak} for the other cases (which are mostly variations of
the ideas given in the proof below).

\noindent \begin{proof} If $m_v=0$ for all $v\in V$, it means that all
  of the dihedral angles in $P$ are equals to those of $Q$, which
  implies isometry. So we assume the alternative
  w.r.t.~Eq.~\eqref{eq:ct} above: $\forall v\in V\;(m_v>0)$, and aim
  for a contradiction. Let $M=\sum_{v\in V} m_v$: by Lemma
  \ref{l:signchange} and because $m_v>0$ for each $v$, we have $M\ge
  4|V|$, a lower bound for $M$. We now construct a contradicting upper
  bound for $M$. For every $h\ge 3$, we let $F_h$ be the number of
  polygonal faces of $P$ with $h$ sides (or edges). The total number
  of polygonal faces in $P$ (or $Q$) is $\mathcal{F}=\sum_h F_h$, and
  the total number of edges is therefore
  $\mathcal{E}=\frac{1}{2}\sum_h h F_h$ (we divide by 2 since each
  edge is counted twice in the sum --- one per adjacent face --- given
  that $P,Q$ are closed). A simple term by term comparison of
  $\mathcal{F}$ and $\mathcal{E}$ yields
  $4\mathcal{E}-4\mathcal{F}=\sum_h 2(h-2)F_h$. Since each polygonal
  face $f$ of $P$ is itself closed, the number $c_f$ of sign changes
  of the quantities $\ell_{uv}$ over all edges $\{u,v\}$ adjacent to
  the face $f$ is even, by the same argument given in Lemma
  \ref{l:even}. It follows that if the number $h$ of edges adjacent to
  the face $f$ is even, then $c_f\le h$, and $c_f\le h-1$ if $h$ is
  odd. This allows us to compute an upper bound on $M$:
  \begin{eqnarray*}
    M &\le& 2 F_3 + 4F_4 +4 F_5 + 6 F_6 + 6 F_7 + 8 F_8 + \dots \\
      &\le& 2 F_3 + 4F_4 +6 F_5 + 8 F_6 + 10 F_7 + 12 F_8 + \dots \\
      &\le& 4\mathcal{E}-4\mathcal{F} = 4|V|-8.
  \end{eqnarray*}
  The middle step follows by simply increasing each coefficient. The
  last step is based on Euler's characteristic \cite{eulerchar}:
  $|V|+\mathcal{F}-\mathcal{E}=2$. Hence we have $4|V|\le M\le
  4|V|-8$, which is a contradiction.
\end{proof}

\subsection{Euler was wrong: Connelly's counterexample}
Proofs behind counterexamples can rarely be termed ``beautiful'' since
they usually lack generality (as they are applied to one particular
example). Counterexamples can nonetheless be dazzling by
themselves. Connelly's counterexample \cite{connelly-countereg} to the
Euler's conjecture consists in a very special non-generic nonconvex
polyhedron which flexes, while keeping combinatorial equivalence and
facewise equality with all polyhedra in the flex. Some years later,
Klaus Steffen produced a much simpler polyhedron with the same
properties\footnote{See \label{fn:steffen}
  \url{http://demonstrations.wolfram.com/SteffensFlexiblePolyhedron/}.}. It
is this polyhedron we exhibit in Fig.~\ref{f:steffen2}.
\begin{figure}[!ht]
  \begin{minipage}{14cm}
    \begin{center}
      \includegraphics[width=3.5cm,angle=270]{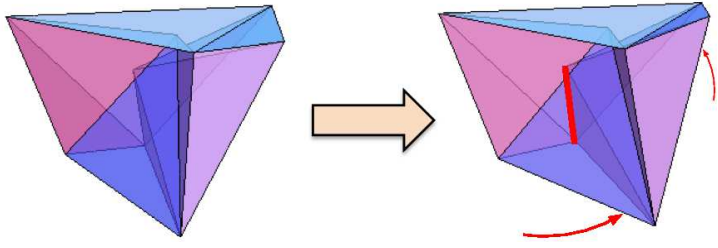}
    \end{center}
  \end{minipage}
  \caption{Steffen's polyhedron: the flex (these two images were
    obtained as snapshot from the {\it Mathematica} \cite{mathematica}
    demonstration cited in Footnote \ref{fn:steffen}). There is a
    rotation, in the direction showed by the arrows, around the edge
    which is emphasized on the right picture. The short upper right
    edge only appears shorter on the right because of perspective.}
  \label{f:steffen2}
\end{figure}

\section{Cayley-Menger determinants and the simplex volume}
\label{s:cayleymenger}
The foundation of modern DG, as investigated by Menger \cite{menger31}
and Blumenthal \cite{blumenthal}, rests on the fact that:
\begin{quote}
  {\it the four-dimensional volume of a four-dimensional simplex
    embedded in three dimensional space is zero,} ($\ast$)
\end{quote}
which we could also informally state as ``flat simplices have zero
volume''.  This is related to DG because the volume of a simplex can
be expressed in terms of the lengths of the simplex sides, which
yields a polynomial in the length of the simplex side lengths that can
be equated to zero. If these lengths are expressed in function of the
vertex positions as $\|x_u-x_v\|^2$, this yields a polynomial equation
in the positions $x_1,\ldots,x_5$ of the simplex vertices in terms of
its side lengths. Thus, if we know the positions of $x_1,\ldots,x_4$,
we can compute the unknown position of $x_5$ or prove that no such
position exists, through a process called {\it trilateration}
\cite{dvop}.

The proof of ($\ast$) was published by Arthur Cayley in 1841
\cite{cayley1841}, during his undergraduate studies. It is based on
the following well-known lemma about determinants (stated without
proof in Cayley's paper).
\begin{lem}
If $A,B$ are square matrices having the same size,
$|AB|=|A||B|$.\label{clem} 
\end{lem}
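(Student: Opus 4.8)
The plan is to prove the multiplicativity of the determinant, $|AB| = |A||B|$ for square matrices $A,B$ of the same size $n$, by the standard argument that splits into the degenerate and nondegenerate cases. First I would dispose of the case where $A$ is singular: then $\mathrm{rank}(A) < n$, so the rows of $A$ are linearly dependent, whence the rows of $AB$ are linearly dependent as well (each row of $AB$ is the corresponding row of $A$ acting on the columns of $B$, so the same dependence relation carries over). Thus $AB$ is singular, $|AB| = 0 = |A||B|$, and the identity holds trivially.

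For the main case, $A$ nonsingular, I would use the fact that an invertible matrix is a product of elementary matrices, so it suffices to check the identity when $A$ is elementary, i.e.\ that $|EB| = |E||B|$ for each of the three types of elementary row operation (swapping two rows, scaling a row by $\lambda \neq 0$, adding a multiple of one row to another). Each of these is a direct consequence of the defining properties of the determinant as an alternating multilinear function of the rows: a row swap multiplies the determinant by $-1$ and $|E| = -1$; scaling a row by $\lambda$ multiplies the determinant by $\lambda$ and $|E| = \lambda$; a row addition leaves the determinant unchanged and $|E| = 1$. Writing $A = E_1 E_2 \cdots E_k$ and iterating $|E_i X| = |E_i|\,|X|$ gives $|AB| = |E_1|\cdots|E_k|\,|B| = |A|\,|B|$, the last equality coming from the same computation applied with $B = I$.

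Alternatively — and perhaps more in the spirit of a self-contained note — one can fix $B$ and consider the function $D(A) = |AB|$ as a function of the rows of $A$; one checks that $D$ is alternating and multilinear in those rows (since the $i$-th row of $AB$ depends linearly on the $i$-th row of $A$), and that $D(I) = |B|$. By the uniqueness of the alternating multilinear form normalized at the identity, $D(A) = |A|\cdot D(I) = |A||B|$. The main (really, the only) obstacle is purely expository: deciding how much of the axiomatic characterization of the determinant to take for granted. Since the paper cites this as a "well-known lemma" stated without proof even by Cayley, I would keep the argument to a few lines, citing the standard linear-algebra facts (existence and uniqueness of the alternating multilinear form, or the decomposition into elementary matrices) rather than reproving them.
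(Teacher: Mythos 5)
Your proposal is correct, but there is nothing in the paper to compare it against: the lemma is deliberately stated \emph{without} proof, both here and in Cayley's original paper, as a well-known fact from linear algebra that the authors take for granted. Both routes you sketch --- the reduction to elementary matrices after disposing of the singular case, and the slicker argument that $A \mapsto |AB|$ is alternating and multilinear in the rows of $A$ and hence equals $|A|\cdot|B|$ by uniqueness of such forms normalized at the identity --- are standard and valid, and either would serve perfectly well if one wished to make the paper self-contained on this point. Your closing judgment is also the right one: given that the paper's purpose is to use this lemma (in Cayley's proof that $|A||B|=|AB|$ forces the Cayley--Menger determinant to vanish), a one-line citation of the multiplicativity of the determinant is all that is warranted.
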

\begin{thm}[Cayley \cite{cayley1841}]
Given five points $x_1,\ldots,x_5\in\mathbb{R}^4$ all belonging to an
affine 3D subspace of $\mathbb{R}^4$, let $d_{ij}=\|x_i-x_j\|_2$ for
each $i,j\le 5$. Then
\begin{equation}
  \left|\begin{array}{cccccc}
  0 & d_{12}^2 & d_{13}^2 & d_{14}^2 & d_{15}^2 & 1 \\
  d_{21}^2 & 0 & d_{23}^2 & d_{24}^2 & d_{25}^2 & 1 \\
  d_{31}^2 & d_{32}^2 & 0 & d_{34}^2 & d_{35}^2 & 1 \\
  d_{41}^2 & d_{42}^2 & d_{43}^2 & 0 & d_{45}^2 & 1 \\
  d_{51}^2 & d_{52}^2 & d_{53}^2 & d_{54}^2 & 0 & 1 \\
  1 & 1 & 1 & 1 & 1 & 0 \end{array}\right|=0.
  \label{eq:cm0}
\end{equation}
\end{thm}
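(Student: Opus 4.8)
The plan is to exhibit the $6\times 6$ matrix in \eqref{eq:cm0} as a product $M = LR$ of two matrices whose determinants are manifestly zero, then invoke Lemma~\ref{clem}. The key observation is the algebraic identity $d_{ij}^2 = \|x_i-x_j\|^2 = \|x_i\|^2 - 2\transpose{x_i}x_j + \|x_j\|^2$. Writing each point as $x_i\in\mathbb{R}^4$ but recalling that all five lie in a common affine $3$-dimensional subspace, I would first perform an affine change of coordinates so that the points effectively live in $\mathbb{R}^3$; equivalently, their augmented coordinate vectors $\hat{x}_i = \transpose{(1,x_i)} \in \mathbb{R}^5$ span a space of dimension at most $4$. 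Set $X = [\hat{x}_1\,|\,\cdots\,|\,\hat{x}_5]$, a $5\times 5$ matrix of rank at most $4$, so $|X| = 0$.

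Next I would build the factorization. Using the expansion above, one checks that the top-left $5\times 5$ block $(d_{ij}^2)$ equals $\transpose{p}\mathbf{1} + \mathbf{1}\transpose{p} - 2\,\transpose{X'}X'$ where $p = (\|x_1\|^2,\ldots,\|x_5\|^2)$ and $X' = [x_1\,|\,\cdots\,|\,x_5]$. The standard trick is to augment this into a $6\times 6$ identity: define
\begin{equation*}
  L = \left(\begin{array}{cc} -2\transpose{X'} & \mathbf{1}\quad p \\ \transpose{\mathbf{0}}\ \ \transpose{\mathbf{0}} & 0\quad 1 \end{array}\right),
  \qquad
  R = \left(\begin{array}{cc} X' & \transpose{\mathbf{0}} \\ \transpose{p}\ \ 1 & 0 \\ \transpose{\mathbf{1}}\ \ 0 & 0 \end{array}\right),
\end{equation*}
arranged so that $LR$ reproduces exactly the bordered matrix in \eqref{eq:cm0}: the $(i,j)$ entry becomes $-2\transpose{x_i}x_j + \|x_j\|^2 + \|x_i\|^2 = d_{ij}^2$, the last column of $LR$ becomes all $1$'s (and a $0$), the last row likewise, and the bottom-right entry is $0$. (I will need to check the block sizes line up — the inner dimension should be $6$, padding $X'$ with the constant rows $\transpose{p}$ and $\transpose{\mathbf{1}}$ — and fix signs and the placement of the border so the product is literally the matrix displayed.)

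Finally, the determinant computation: $R$ has its first block row of $4+2$ rows, but the columns involving $X'$ have rank $\le 3$ because the $x_i$ lie in an affine plane of dimension $3$; together with the two extra rows $\transpose{p},\transpose{\mathbf{1}}$ this still leaves $R$ with rank at most $5 < 6$, hence $|R| = 0$, and by Lemma~\ref{clem}, $|M| = |L|\,|R| = 0$. \textbf{The main obstacle} I anticipate is purely bookkeeping: getting the block matrices $L$ and $R$ to have compatible shapes while making their product equal \emph{exactly} the symmetric bordered matrix of \eqref{eq:cm0}, and then arguing cleanly that the affine-$3$-subspace hypothesis forces a rank drop in one factor. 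The conceptual content — "flat simplex $\Rightarrow$ rank deficiency $\Rightarrow$ vanishing Cayley–Menger determinant" — is immediate once the factorization is set up correctly; the risk is a sign error or an off-by-one in the bordering. An alternative, if the explicit factorization proves fiddly, is to note that the $6\times 6$ matrix is (up to a constant) the Cayley–Menger determinant whose value is a fixed multiple of the squared $4$-volume of the simplex on $x_1,\ldots,x_5$, which is $0$ since those points are coplanar in the affine sense in $\mathbb{R}^4$; but deriving that volume formula itself essentially requires the same determinant factorization, so I would present the direct $|AB|=|A||B|$ argument as the honest proof.
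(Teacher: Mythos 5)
Your proposal is essentially Cayley's own argument as reproduced in the paper: the same factorization of the bordered matrix in \eqref{eq:cm0} into two singular $6\times 6$ factors built from the point coordinates, the squared norms $\|x_i\|^2$ and the all-ones border (your $L,R$ are the paper's $A,B$ up to a permutation of the inner index), followed by Lemma \ref{clem}. The one point to watch is your claim that the $X'$ columns have rank $\le 3$: for an affine $3$-subspace not through the origin $X'$ can have rank $4$ and the count $3+2=5$ breaks, so you genuinely need the affine normalization you mention at the outset (legitimate since \eqref{eq:cm0} depends only on the $d_{ij}$) --- which is precisely what the paper does by setting every fourth coordinate to zero, so that one column of $A$ and one row of $B$ vanish outright.
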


We note that Cayley's theorem is expressed for $n=5$ points in
$\mathbb{R}^3$, but it also holds for $n\ge 3$ points in
$\mathbb{R}^{n-2}$ \cite{blumenthal}. Cayley explicitly remarks that
it holds for the cases $n=4$ and $n=3$ (see \cite[VIII,
  \S~5]{sommerville} for the proof of general $n$). The determinant on
the right-hand side of Eq.~\eqref{eq:cm0} is called {\it Cayley-Menger
  determinant}, denoted by $\Delta$. We remark that in the proof below
$x_{ik}$ is the $k$-th component of $x_i$, for each $i\le 5, k\le 4$.

\noindent\begin{proof}
We follow Cayley's treatment. He pulls the following two matrices
{\small
\begin{equation*}
  A=\left(\begin{array}{cccccc}
    \|x_1\|^2 & -2x_{11} & -2x_{12} & -2x_{13} & -2x_{14} & 1 \\
    \|x_2\|^2 & -2x_{21} & -2x_{22} & -2x_{23} & -2x_{24} & 1 \\
    \|x_3\|^2 & -2x_{31} & -2x_{32} & -2x_{33} & -2x_{34} & 1 \\
    \|x_4\|^2 & -2x_{41} & -2x_{42} & -2x_{43} & -2x_{44} & 1 \\
    \|x_5\|^2 & -2x_{51} & -2x_{52} & -2x_{53} & -2x_{54} & 1 \\
    1 & 0 & 0 & 0 & 0 & 0 \end{array}\right), \quad
  B=\left(\begin{array}{cccccc}
    1 & 1 & 1 & 1 & 1 & 0 \\
    x_{11} & x_{21} & x_{31} & x_{41} & x_{51} & 0 \\
    x_{12} & x_{22} & x_{32} & x_{42} & x_{52} & 0 \\
    x_{13} & x_{23} & x_{33} & x_{43} & x_{53} & 0 \\
    x_{14} & x_{24} & x_{34} & x_{44} & x_{54} & 0 \\
    \|x_1\|^2 & \|x_2\|^2 & \|x_3\|^2 & \|x_4\|^2 & \|x_5\|^2 & 1     
  \end{array}\right)
\end{equation*}
}%
out of a magic hat. He performs the product $AB$, re-arranging and
collecting terms, and obtains a $6\times 6$ matrix where the last row
and column are $(1,1,1,1,1,0)$, and the $(i,j)$-th component is
$\|x_i-x_j\|_2^2$ for every $i,j\le 5$. To see this, it suffices to
carry out the computations using {\it Mathematica} \cite{mathematica};
by way of an example, the first diagonal component of $AB$ is
$\|x_1\|^2-2\sum\limits_{k\le 4} x_{1k}x_{1k}+\|x_1\|^2=0$, and the
component on the first row, second column of $AB$ is
$\|x_1\|^2-2\sum\limits_{k\le 4}
x_{1k}x_{2k}+\|x_2\|^2=\|x_1-x_2\|^2$. In other words, $|AB|$ is the
Cayley-Menger determinant in Eq.~\eqref{eq:cm0}. On the other hand, if
we set $x_{4k}=0$ for each $k\le 4$, effectively projecting the five
four-dimensional points in three-dimensional space, it is easy to show
that $|A|=|B|=0$ since the 5-th columns of both $A$ and the 5th row of
$B$ are zero. Hence we have $0=|A||B|=|AB|$ by Lemma \ref{clem}, and
$|AB|=0$ is precisely Eq.~\eqref{eq:cm0} as claimed.
\end{proof}

The missing link is the relationship of the Cayley-Menger determinant
with the volume of an $n$-simplex. Since this is not part of Cayley's
paper, we only establish the relationship for $n=3$. Let $d_{12}=a$,
$d_{13}=b$, $d_{23}=c$. Then:
\begin{equation*}
  \left|\begin{array}{cccccc}
  0 & a^2 & b^2  & 1 \\
  a^2 & 0 & c^2  & 1 \\
  b^2 & c^2 & 0  & 1 \\
  1 & 1 & 1 & 0 \end{array}\right| = a^4 - 2 a^2 b^2 + b^4 - 2 a^2 c^2
  - 2 b^2 c^2 + c^4 = -16(s(s-a)(s-b)(s-c)),
\end{equation*}
where $s=\frac{1}{2}(a+b+c)$ (this identity can be established by
using e.g.~{\it Mathematica} \cite{mathematica}). By Heron's theorem
(Thm.~\ref{t:heron} above) we know that the area of a triangle with
side lengths $a,b,c$ is $\sqrt{s(s-a)(s-b)(s-c)}$. So, for $n=3$, the
determinant on the left-hand side is proportional to the negative of
the square of the triangle area. This result can be generalized to
every value of $n$ \cite[II, \S~40, p.~98]{blumenthal}: it turns out
that the $n$-dimensional volume $V_n$ of an $n$-simplex in
$\mathbb{R}^n$ with side length matrix $d=(d_{ij}\;|\;i,j\le n+1)$ is:
\begin{equation*}
V_n^2 = \frac{(-1)^{n-1}}{2^n (n!)^2} \Delta.
\end{equation*}

The beauty of Cayley's proof is in its extreme compactness: it uses
determinants to hide all the details of elimination theory which would
be necessary otherwise. His paper also shows some of these details for
the simplest case $n=3$. The starting equations, as well as the
symbolic manipulation steps, depend on $n$. Although Cayley's proof is
only given for $n=5$, Cayley's treatment goes through essentially
unchanged for any number $n$ of points in dimension $n-2$.

\section{Menger's characterization of abstract metric spaces}
\label{s:menger}
At a time where mathematicians were heeding Hilbert's call to
formalization and axiomatization, Menger presented new axioms for
geometry based on the notion of distance, and provided conditions for
arbitrary sets to ``look like'' Euclidean spaces, at least
distancewise \cite{menger28,menger31}. Menger's system allows a formal
treatment of geometry based on distances as ``internal
coordinates''. The starting point is to consider the relations of
geometrical figures having proportional distances between pairs of
corresponding points, i.e.~congruence. Menger's definition of a
congruence system is defined axiomatically, and the resulting
characterization of abstract distance spaces with respect to subsets
of Euclidean spaces (possibly his most important result) transforms a
possibly infinite verification procedure (any subset of any number of
points) into a finitistic one (any subset of $n+3$ points, where $n$
is the dimension of the Euclidean space).

It is remarkable that almost none of the results below offers an
intuitive geometrical grasp, such as the proofs of Heron's formula and
Cayley's theorem do. As formal mathematics has it, part of the beauty
in Menger's work consists in turning the ``visual'' geometrical proofs
based on intuition into formal symbolic arguments based on sets and
relations. On the other hand, Menger himself gave a geometric
intuition of his results in \cite[p.~335]{mengerS}, which we comment
in Sect.~\ref{s:mengerintuitive} below.

\subsection{Menger's axioms}
Let $\mathcal{S}$ be a system of sets, and for any set
$S\in\mathcal{S}$ and any two (not necessarily distinct) points
$p,q\in S$, denote the couple $(p,q)$ by $pq$. Menger defines a
relation $\approx$ by means of the following axioms.
\begin{enumerate}
\item $\forall S,T\in\mathcal{S}$, $\forall p,q\in S$ and $\forall
  r,s\in T$, we have either $pq\approx rs$ or $pq\not\approx rs$ but
  not both.\label{M1}
\item $\forall S\in\mathcal{S}$ and $\forall p,q\in S$ we have
  $pq\approx qp$.\label{M2}
\item $\forall S,T\in\mathcal{S}$, $\forall p\in S$ and $\forall
  r,s\in T$, we have $pp\approx rs$ if and only if $r=s$.\label{M3}
\item $\forall S,T\in\mathcal{S}$, $\forall p,q\in S$ and $\forall
  r,s\in T$, if $pq\approx rs$ then $rs\approx pq$.\label{M4}
\item $\forall S,T,U\in\mathcal{S}$, $\forall p,q\in S$, $\forall
  r,s\in T$ and $\forall t,u\in U$, if $pq\approx rs$ and $pq\approx
  tu$ then $rs\approx tu$.\label{M5}
\end{enumerate}
The couple $(\mathcal{S},\approx)$ is called a {\it congruence
  system}, and the $\approx$ relation is called {\it congruence}.

Today, we are used to think of {\it relations} as defined on a single
set. We remark that in Menger's treatment, congruence is a binary
relation defined on sets of ordered pairs of points, where each point
in each pair belongs to the same set as the other, yet left-hand and
right-hand side terms may belong to different sets. We now interpret
each axiom from a more contemporary point of view.
\begin{enumerate}
  \item Axiom \ref{M1} states that Menger's congruence relation is in
    fact a {\it partial} relation on
    $\mathscr{S}=(\bigcup\mathcal{S})^2$ (the Cartesian product of the
    union of all sets $S\in\mathcal{S}$ by itself), which is only
    defined for a couple $pq\in\mathscr{S}$ whenever $\exists
    S\in\mathcal{S}$ such that $p,q\in S$.
  \item By axiom \ref{M2}, the $\approx$ relation acts on sets of {\it
    unordered} pairs of (not necessarily distinct) points; we call
    $\bar{\mathscr{S}}$ the set of all unordered pairs of points from
    all sets $S\in\mathcal{S}$.
  \item By axiom \ref{M3}, $rs$ is congruent to a pair $pq$ where
    $p=q$ if and only if $r=s$. 
  \item Axiom \ref{M4} states that $\approx$ is a symmetric relation.
  \item Axiom \ref{M5} states that $\approx$ is a transitive relation.
\end{enumerate}
Note that $\approx$ is also reflexive (i.e.~$pq\approx pq$) since
$pq\approx qp\approx pq$ by two successive applications of Axiom
\ref{M2}. So, using today's terminology, $\approx$ is an equivalence
relation defined on a subset of $\bar{\mathscr{S}}$.

\subsection{A model for the axioms}
Menger's model for his axioms is a {\it semi-metric} space $S$, i.e.~a
set $S$ of points such that to each unordered pair $\{p,q\}$ of points
in $S$ we assign a nonnegative real number $d_{pq}$ which we call {\it
  distance} between $p$ and $q$. Under this interpretation, Axiom
\ref{M2} tells us that $d_{pq}=d_{qp}$ for each pair of points $p,q$,
and Axiom \ref{M3} tells us that $rs$ is congruent to a single point
if and only if $d_{rs}=0$, which, together with nonnegativity, are the
defining properties of {\it semi-metrics} (the remaining property, the
triangular inequality, tells semi-metrics apart from {\it
  metrics}). Thus, the set $\mathcal{S}$ of all semi-metric spaces
together with the relation given by $pq\approx rs\leftrightarrow
d_{pq}=d_{rs}$ is a congruence system.

\subsection{A finitistic characterization of semi-metric spaces}
Two sets $S,T\in\mathcal{S}$ are {\it congruent} if there is a map
(called {\it congruence map}) $\phi:S\to T$, such that $pq\approx
\phi(p)\phi(q)$ for all $p,q\in S$. We denote this relation by
$S\approx_\phi T$, dropping the $\phi$ if it is clear from the
context.
\begin{lem}
  Any congruence map $\phi:S\to T$ is injective.
  \label{l:injective}
\end{lem}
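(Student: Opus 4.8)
The plan is to show that if $\phi(p) = \phi(q)$ for some $p, q \in S$, then necessarily $p = q$. The key observation is that congruence of single points is detected by Axiom \ref{M3}: a pair $rs$ in $T$ is congruent to a ``degenerate'' pair $pp$ if and only if $r = s$. So the strategy is to manufacture such a degenerate congruence from the hypothesis $\phi(p) = \phi(q)$ and then invoke Axiom \ref{M3} in reverse.

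Concretely, suppose $\phi(p) = \phi(q)$. First I would apply the defining property of the congruence map $\phi$ to the pair $pp$: since $\phi: S \to T$ is a congruence map, we have $pp \approx \phi(p)\phi(p)$. But $\phi(p)\phi(p) = \phi(p)\phi(q)$ because $\phi(p) = \phi(q)$, so in fact $pp \approx \phi(p)\phi(q)$. On the other hand, again by the congruence map property applied to the pair $pq$, we have $pq \approx \phi(p)\phi(q)$. Now I would combine these: from $pp \approx \phi(p)\phi(q)$ and $pq \approx \phi(p)\phi(q)$, symmetry (Axiom \ref{M4}) gives $\phi(p)\phi(q) \approx pq$, and then transitivity (Axiom \ref{M5}, applied with the three pairs $pp$, $\phi(p)\phi(q)$, and $pq$) yields $pp \approx pq$.

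Finally, I would apply Axiom \ref{M3} to the congruence $pp \approx pq$: taking the roles so that the left side is a degenerate pair from $S$ and the right side is $pq \in S$, Axiom \ref{M3} forces $p = q$. (One should be slightly careful here that Axiom \ref{M3} is stated for $pp \approx rs$ with $r, s$ in a possibly different set $T$; here we simply take $T = S$ and $r = p$, $s = q$, which is legitimate since $p, q \in S$.) This contradicts the assumption that $p \neq q$ if $p$ and $q$ were distinct, so $\phi$ is injective.

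I do not expect any serious obstacle here — the whole proof is a short formal manipulation of the five axioms, and the only point requiring a little attention is bookkeeping: making sure each application of an axiom respects the constraint that both points of a pair lie in a common set of $\mathcal{S}$ (which is automatic, since $p, q \in S$ and $\phi(p), \phi(q) \in T$ throughout), and correctly matching the degenerate pair $pp$ against the form demanded by Axiom \ref{M3}. The mild subtlety is recognizing that we must first ``transport'' the degeneracy through $\phi$ — i.e., use $\phi(p)\phi(p) \approx pp$ rather than trying to argue directly — before we can conclude $p=q$.
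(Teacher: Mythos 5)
Your proof is correct and follows essentially the same idea as the paper's: both hinge on turning $\phi(p)\phi(q)$ into the degenerate pair $\phi(p)\phi(p)$ and invoking Axiom \ref{M3}. The paper is just slightly more direct --- from $pq\approx\phi(p)\phi(q)=\phi(p)\phi(p)$ it applies Axiom \ref{M3} (via symmetry) immediately, without your extra detour through $pp\approx\phi(p)\phi(p)$ and transitivity to obtain $pp\approx pq$.
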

\begin{proof}
Suppose, to get a contradiction, that $\exists p,q\in S$ with
$p\not=q$ and $\phi(p)=\phi(q)$: then
$pq\approx\phi(p)\phi(q)=\phi(p)\phi(p)$ and so, by Axiom \ref{M3},
$p=q$ against assumption.
\end{proof}
If $S$ is congruent to a subset of $T$, then we say that $S$ is {\it
  congruently embeddable} in $T$.

\subsubsection{Congruence order}
Now consider a set $S\in\mathcal{S}$ and an integer $n\ge 0$
with the following property: for any $T\in\mathcal{S}$, if all
$n$-point subsets of $T$ are congruent to an $n$-point subset of $S$,
then $T$ is congruently embeddable in $S$. If this property holds,
then $S$ is said to have {\it congruence order} $n$.  Formally, the
property is written as follows:
\begin{equation}
  \forall T\in\mathcal{S} \; \forall T'\subseteq T \quad (\ (|T'|=n\to
  \exists S'\subseteq S\;(|S'|=n\land T'\approx S')) \quad
  \longrightarrow \quad \exists R\subseteq S\;(T\approx
  R)\ ). \label{eq:congruenceorder}
\end{equation}

If $|S|<n$ for some positive integer $n$, then $S$ can have congruence
order $n$, since the definition is vacuously satisfied. So we assume
in the following that $|S|\ge n$.
\begin{prop}
If $S$ has congruence order $n$ in $\mathcal{S}$, then it also has
congruence order $m$ for each $m>n$. \label{p:comn}
\end{prop}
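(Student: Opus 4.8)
The plan is to prove the single increment from $n$ to $n+1$ and then iterate: once I know that congruence order $n$ of $S$ forces congruence order $n+1$ of $S$, repeated application yields congruence order $m$ for every $m>n$. So I assume $S$ has congruence order $n$ and aim to show it has congruence order $n+1$.

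Take an arbitrary $T\in\mathcal{S}$ (I restrict to $T$ with $|T|\ge n+1$; for smaller $T$ there are no $(n+1)$-point subsets and nothing to check, by the same convention adopted for $|S|$ above). Suppose every $(n+1)$-point subset of $T$ is congruent to an $(n+1)$-point subset of $S$; the goal is to conclude that $T$ is congruently embeddable in $S$. Since $S$ has congruence order $n$, it is enough to verify the level-$n$ hypothesis for $T$, namely that every $n$-point subset $T_0\subseteq T$ is congruent to some $n$-point subset of $S$.

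To get this, I would fatten $T_0$ by a single point. Because $|T|\ge n+1>|T_0|$, there is a point $t\in T\setminus T_0$, and $T_1:=T_0\cup\{t\}$ is an $(n+1)$-point subset of $T$. By assumption there is a congruence map $\psi:T_1\to S$, and $\psi(T_1)$ is an $(n+1)$-point subset of $S$ because $\psi$ is injective by Lemma \ref{l:injective}. Restricting $\psi$ to $T_0$ is again a congruence map — the defining relation $pq\approx\psi(p)\psi(q)$ holds for all $p,q\in T_1$, hence in particular for all $p,q\in T_0$ — and its image $\psi(T_0)$ has exactly $n$ points, once more by Lemma \ref{l:injective}. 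Thus $T_0\approx\psi(T_0)$ with $\psi(T_0)$ an $n$-point subset of $S$. As $T_0$ was an arbitrary $n$-point subset of $T$, the level-$n$ hypothesis holds for $T$, so congruence order $n$ of $S$ gives that $T$ is congruently embeddable in $S$, completing the increment.

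The argument is essentially bookkeeping, so I do not expect a deep obstacle; the one place that needs genuine care — and where a hasty proof could slip — is the assertion that $\psi(T_0)$ has \emph{exactly} $n$ points. The definition of congruence order at level $n$ requires $T_0$ to be congruent to an $n$-point subset of $S$, not merely to some subset of $S$, so the cardinality really must be preserved under the restriction; this is precisely what injectivity of congruence maps (Lemma \ref{l:injective}) supplies. That is why that innocuous-looking lemma is the linchpin here, and it is also what forces the harmless size restriction $|T|\ge n+1$ to be mentioned.
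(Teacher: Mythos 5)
Your proof is correct, but it runs in the opposite direction from the paper's. The paper takes a $T$ satisfying the level-$n$ antecedent, obtains the congruence $T\approx_\phi R\subseteq S$, and then observes that $\phi$ sends every $m$-point subset of $T$ to a congruent $m$-point subset of $S$ --- that is, it derives the level-$m$ antecedent \emph{and} the consequent from the level-$n$ antecedent, which strictly speaking only verifies the level-$m$ implication of Eq.~\eqref{eq:congruenceorder} for those $T$ already known to satisfy the level-$n$ condition. Your argument supplies exactly the converse reduction that makes the statement go through for arbitrary $T$: the level-$(n+1)$ antecedent implies the level-$n$ one, because any $n$-point subset $T_0$ can be fattened to $T_1=T_0\cup\{t\}$, the hypothesized congruence $\psi$ of $T_1$ into $S$ restricts to a congruence of $T_0$, and Lemma~\ref{l:injective} guarantees the image still has exactly $n$ points; iterating the increment then gives every $m>n$. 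This is the logically tighter route, and your insistence on the injectivity point is well placed. The one caveat, which affects your proof and the paper's equally, is the range $n\le|T|<m$ (or $|T|\le n$ in your single increment): there the level-$m$ antecedent is vacuously true, no $(n+1)$-point subsets exist to restrict, and neither argument produces an embedding of $T$ into $S$; your appeal to ``the same convention adopted for $|S|$'' does not literally cover this, since the paper's convention concerns small $S$, not small $T$. Some such convention on $T$ must be read into the definition for the proposition to hold at all, so this is a defect of the setup rather than of your argument, but it deserves an explicit sentence rather than a wave at an analogy.
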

\begin{proof}
  By hypothesis, for every $T\in\mathcal{S}$, if every $n$-point
  subset $T'$ of $T$ is congruent to an $n$-point subset of $S$, then
  there is a subset $R$ of $S$ such that $T\approx_\phi R$. Now any
  $m$-point subset of $S$ is mapped by $\phi$ to a congruent $m$-point
  subset of $S$, and again $T\approx R$, so
  Eq.~\eqref{eq:congruenceorder} is satisfied for $S$ and $m$.
\end{proof}

\begin{prop}
$\mathbb{R}^0$ (i.e.~the Euclidean space which simply consists of the
  origin) has minimum congruence order $2$ in $\mathcal{S}$.
\end{prop}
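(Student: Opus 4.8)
The plan is to prove two complementary statements: that $\mathbb{R}^0=\{0\}$ has congruence order $2$, and that it does \emph{not} have congruence order $1$; combined with Proposition~\ref{p:comn} these show that $2$ is the minimum. Throughout I will work with the model $\mathcal{S}$ of all semi-metric spaces, where $pq\approx rs$ means $d_{pq}=d_{rs}$, and I will repeatedly use Axiom~\ref{M3} (a pair is congruent to a one-point pair iff it is itself a one-point pair) together with Lemma~\ref{l:injective} (congruence maps are injective).

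For the negative part, I would exhibit a semi-metric space $T$ that witnesses the failure of congruence order $1$. Take any $T\in\mathcal{S}$ with $|T|\ge 2$ (for instance a two-point space, or $\mathbb{R}^1$). Every one-point subset $\{t\}\subseteq T$ is congruent to the one-point subset $\{0\}$ of $S=\mathbb{R}^0$, since the map $t\mapsto 0$ satisfies $tt\approx 00$ by Axiom~\ref{M3} (indeed $0=0$). So the antecedent of the congruence-order property with $n=1$ holds for this $T$. But the conclusion fails: any congruence map $\phi:T\to\{0\}$ would be injective by Lemma~\ref{l:injective}, contradicting $|T|\ge 2>1=|\{0\}|$. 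Hence $\mathbb{R}^0$ does not have congruence order $1$ (and, by the contrapositive of Proposition~\ref{p:comn}, not congruence order $0$ either).

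For the positive part, let $T\in\mathcal{S}$ be arbitrary and assume the hypothesis of Eq.~\eqref{eq:congruenceorder} with $n=2$, namely that every two-point subset of $T$ is congruent to a two-point subset of $S=\{0\}$. Since $\{0\}$ has no two-element subset at all, this hypothesis can hold only vacuously, i.e.\ only when $T$ has no two-point subset, so $|T|\le 1$. If $T=\emptyset$ the empty map embeds $T$ into $S$; if $T=\{t\}$ the map $\phi(t)=0$ is a congruence map (again $tt\approx 00$ by Axiom~\ref{M3}), so $T\approx\{0\}\subseteq S$. In either case $T$ is congruently embeddable in $S$, so Eq.~\eqref{eq:congruenceorder} holds and $\mathbb{R}^0$ has congruence order $2$. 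Combining with the negative part yields the claim.

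There is no real computational difficulty here; the only things to watch are the precise quantifier structure of Eq.~\eqref{eq:congruenceorder} and the handling of the degenerate subsets $\emptyset$ and singletons. The single point worth emphasizing — and the nearest thing to an ``obstacle'' — is the reason the $n=2$ case succeeds: it is not that the matching condition is verified, but that $S$ is simply too small to contain a two-point set, so the antecedent is never genuinely triggered; this is exactly the vacuous-satisfaction phenomenon flagged in the remark preceding the proposition.
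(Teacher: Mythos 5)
Your proof is correct and follows essentially the same route as the paper's: congruence order $2$ holds vacuously because $\mathbb{R}^0$ has no two-point subsets, while congruence order $1$ fails because every singleton of a larger $T$ is congruent to $\{0\}$ (Axiom~\ref{M3}) yet $T$ cannot embed injectively (Lemma~\ref{l:injective}). Your explicit treatment of the degenerate cases $|T|\le 1$ in the positive part is a small point of extra care that the paper glosses over, but it is not a different method.
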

\begin{proof}
  Pick any $T\in\mathcal{S}$ with $|T|>1$. None of its $2$-point
  subsets is congruent to any $2$-point subset of $\mathbb{R}^0$,
  since none exists. Moreover, $T$ itself cannot be congruently
  embedded in $\mathbb{R}^0$, since $|T|>1=|\mathbb{R}^0|$ and no
  injective congruence map can be defined, against Lemma
  \ref{l:injective}. So the integer $2$ certainly (vacuously)
  satisfies Eq.~\eqref{eq:congruenceorder} for $S=\mathbb{R}^0$, which
  means that $\mathbb{R}^0$ has congruence order $2$. In view of
  Prop.~\ref{p:comn}, it also has congruence order $m$ for each
  $m>2$. Hence we have to show next that the integer $1$ cannot be a
  congruence order for $\mathbb{R}^0$. To reach a contradiction,
  suppose the contrart, and let $T$ be as above. By Axiom \ref{M3},
  every singleton subset of $T$ is congruent to a subset of
  $\mathbb{R}^0$, namely the subset containing the origin. Thus, by
  Eq.~\eqref{eq:congruenceorder}, $T$ must be congruent to a subset of
  $\mathbb{R}^0$; but, again, $|T|>1=|\mathbb{R}^0|$ contradicts Lemma
  \ref{l:injective}: so $T$ cannot be congruently embedded in
  $\mathbb{R}^0$, which negates Eq.~\eqref{eq:congruenceorder}. Hence
  $1$ cannot be a congruence order for $\mathbb{R}^0$, as claimed.
\end{proof}

\subsubsection{Menger's fundamental result}
The fundamental result proved by Menger in 1928 \cite{menger28} is
that the Euclidean space $\mathbb{R}^n$ has congruence order $n+3$ but
not $n+2$ for each $n>0$ in the family $\mathcal{S}$ of all
semi-metric spaces. The important implication of Menger's result is
that in order to verify whether an abstract semi-metric space is
congruent to a subset of a Euclidean space, we only need to verify
congruence of each of its $n+3$ point subsets.

We follow Blumenthal's treatment \cite{blumenthal}, based on the
following preliminary definitions and properties, which we shall not prove:
\begin{enumerate}
  \item A congruent mapping of a semi-metric space onto itself is
    called a {\it motion}; \label{BD1}
  \item $n+1$ points in $\mathbb{R}^{n}$ are {\it independent} if they
    are not affinely dependent (i.e.~if they do not all belong to a
    single hyperplane in $\mathbb{R}^n$); \label{BD2}
  \item two congruent $(n+1)$-point subsets of $\mathbb{R}^n$ are either
    both independent or both dependent; \label{BP1}
  \item there is at most one point of $\mathbb{R}^n$ with given
    distances from an independent $(n+1)$-point subset;\label{BP2}
  \item any congruence between any two subsets of $\mathbb{R}^n$ can
    be extended to a motion;\label{BP3}
  \item any congruence between any two independent $(n+1)$-point
    subsets of $\mathbb{R}^n$ can be extended to a unique
    motion.\label{BP4}
\end{enumerate}
\begin{thm}[Menger \cite{menger28}] A non-empty semi-metric space $S$ is
  congruently embeddable in $\mathbb{R}^n$ (but not in any
  $\mathbb{R}^r$ for $r<n$) if and only if: (a) $S$ contains an
  $(n+1)$-point subset $S'$ which is congruent with an independent
  $(n+1)$-point subset of $\mathbb{R}^n$; and (b) each $(n+3)$-point
  subset $U$ of $S$ containing $S'$ is congruent to an $(n+3)$-point
  subset of $\mathbb{R}^n$.
\end{thm}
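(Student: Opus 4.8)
The plan is to prove the two implications separately; necessity is short, and sufficiency carries all the weight. \textbf{Necessity.} Suppose $\psi\colon S\to\mathbb{R}^n$ is a congruence whose image cannot be congruently embedded in any $\mathbb{R}^r$ with $r<n$. Then $\psi(S)$ is not contained in any proper affine subspace of $\mathbb{R}^n$ (such a subspace is isometric to $\mathbb{R}^{n-1}$, which would give a forbidden lower-dimensional embedding), so $\psi(S)$ affinely spans $\mathbb{R}^n$ and hence contains $n+1$ affinely independent points $q_0,\dots,q_n$. By Lemma~\ref{l:injective} their preimages are distinct, so they form an $(n+1)$-point set $S'\subseteq S$ with $S'$ congruent (via $\psi|_{S'}$) to the independent set $\{q_0,\dots,q_n\}$: this is (a). For (b), if $U\supseteq S'$ has $n+3$ points, injectivity of $\psi$ makes $\psi(U)$ an $(n+3)$-point subset of $\mathbb{R}^n$ congruent to $U$.

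\textbf{Sufficiency.} Assume (a) and (b); I will construct a congruence $\psi\colon S\to\mathbb{R}^n$. Since $S'\subseteq S$ we have $|S|\ge n+1$; if $|S|=n+1$ then $S=S'$ and (a) is already the assertion, so we may assume $|S|\ge n+3$ (when $|S|<n+3$ the hypotheses, suitably read for small spaces, give the conclusion directly). Fix once and for all a congruence $\phi\colon S'\to\mathbb{R}^n$ onto an independent $(n+1)$-point set $Q$, and set $\psi|_{S'}:=\phi$. First, $S$ is not embeddable in any $\mathbb{R}^r$ with $r<n$: such an embedding would carry $S'$ to an $(n+1)$-point set congruent to $Q$, which would therefore be affinely independent as well (affine independence of finitely many points is determined by their pairwise distances, being detected by a Cayley--Menger determinant, \S~\ref{s:cayleymenger}); but $\mathbb{R}^r$ with $r<n$ contains no affinely independent $(n+1)$-point set.

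The heart of the proof is the definition of $\psi$ on $S\setminus S'$. Fix $p\in S\setminus S'$ and choose an $(n+3)$-point subset $U$ of $S$ with $S'\cup\{p\}\subseteq U$. By (b) there is a congruence $\mu\colon U\to\mathbb{R}^n$; then $\mu(S')$ is an $(n+1)$-point set congruent to $Q$, hence independent by property~\ref{BP1}, so by property~\ref{BP4} the congruence $\phi\circ(\mu|_{S'})^{-1}\colon\mu(S')\to Q$ extends to a motion $g$ of $\mathbb{R}^n$; replacing $\mu$ by $g\circ\mu$ (still a congruence) we may assume $\mu|_{S'}=\phi$. Define $\psi(p):=\mu(p)$. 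This is well defined, for if $\mu'\colon U'\to\mathbb{R}^n$ is obtained from any other such $U'$ with $\mu'|_{S'}=\phi$, then $\mu(p)$ and $\mu'(p)$ both lie at the prescribed distances $d_{p\,p_i}$ from the independent set $Q$, so they coincide by property~\ref{BP2}. It remains to check $d_{ab}=\|\psi(a)-\psi(b)\|$ for all $a,b\in S$: this is the hypothesis on $\phi$ when $a,b\in S'$; when $a\in S'$ and $b=p\notin S'$ it follows from the congruence $\mu$ used to define $\psi(p)$, since $S'\cup\{p\}\subseteq U$; and when $a=p$, $b=q$ are distinct points of $S\setminus S'$, the set $S'\cup\{p,q\}$ has exactly $n+3$ points, so by (b) it admits a congruence $\mu$ into $\mathbb{R}^n$ which, normalized as above so that $\mu|_{S'}=\phi$, satisfies $\mu(p)=\psi(p)$ and $\mu(q)=\psi(q)$ by the well-definedness argument (since $S'\cup\{p,q\}$ is an admissible defining set for each). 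Hence $d_{pq}=\|\mu(p)-\mu(q)\|=\|\psi(p)-\psi(q)\|$, and $\psi\colon S\to\mathbb{R}^n$ is the desired congruent embedding.

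\textbf{Where the difficulty lies.} The only genuinely delicate ingredients are properties~\ref{BP2} and~\ref{BP4}: the former pins down each value $\psi(p)$ uniquely and is what makes the patchwork consistent, while the latter lets us rotate each locally defined congruence so that it agrees with the fixed $\phi$ on $S'$. The one load-bearing combinatorial observation is that for $p,q\notin S'$ the set $S'\cup\{p,q\}$ has \emph{exactly} $n+3$ elements, so hypothesis (b) --- an $(n+3)$-point condition, not an $(n+2)$-point one --- is precisely strong enough to control every pairwise distance at once; this is exactly why the congruence order of $\mathbb{R}^n$ turns out to be $n+3$ rather than $n+2$. Everything else is routine bookkeeping with the six listed properties.
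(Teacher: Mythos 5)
Your proof is correct and follows essentially the same route as the paper's: necessity via injectivity of the congruence map, and sufficiency by fixing a congruence $\phi$ on $S'$, extending it pointwise using (b) together with properties~\ref{BP1}, \ref{BP2} and \ref{BP4}, and verifying arbitrary pairs via the $(n+3)$-point sets $S'\cup\{p,q\}$. Your explicit well-definedness check via property~\ref{BP2} is a slightly cleaner packaging of what the paper achieves through uniqueness of the motion $\omega$, and both treatments gloss over the same small-cardinality edge cases in the same way.
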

The proof of Menger's theorem is very formal (see below) and somewhat
difficult to follow. It is nonetheless a good example of a proof in an
axiomatic setting, where logical reasoning is based on syntactical
transformations induced by inference rules on the given axioms. An
intuitive discussion is provided in Sect.~\ref{s:mengerintuitive}.

\noindent\begin{proof}
($\Rightarrow$) Assume first that $S\approx_\phi
  T\subseteq\mathbb{R}^n$, where the affine closure of $T$ has
  dimension $n$. Then $T$ must contain an independent subset $T'$ with
  $|T'|=n+1$, which we can map back to a subset $S'\subseteq S$ using
  $\phi^{-1}$. Since $\phi,\phi^{-1}$ are injective, $|S'|\le |T'|$,
  and by Axiom \ref{M3} we have $|S'|\ge |T'|$, so $|S'|=n+1$, which
  establishes (a). Now take any $U\subseteq S$ with $|U|=n+3$ and
  $U\supset S'$: this can be mapped via $\phi$ to a subset $W\subseteq
  T$: Lemma \ref{l:injective} ensures injectivity of $\phi$ and hence
  $|W|=n+3$, establishing (b).\\ ($\Leftarrow$) Conversely, assume (a)
  and (b) hold. By (a), let $S'\subseteq S$ with $|S'|=n+1$ and
  $S'\approx_\phi T'\subseteq\mathbb{R}^n$, with $T'$ independent and
  $|T'|=n+1$. We claim that $\phi$ can be extended to a mapping of $S$
  into $\mathbb{R}^n$. Take any $q\in S\smallsetminus S'$: by (b),
  $S'\cup\{q\}\approx_\psi W\subseteq \mathbb{R}^n$ with
  $|W|=n+2$. Note that $T'\approx_\omega W\smallsetminus\{\psi(q)\}$
  by Axiom \ref{M5}, which implies that for any $p\in S'$, we have
  $\omega\phi(p)=\psi(p)$.  Moreover, by Property \ref{BP1} above,
  $W\smallsetminus\{\psi(q)\}$ is independent and has cardinality
  $n+1$, which by Property \ref{BP4} above implies that $\omega$ can
  be extended to a unique motion in $\mathbb{R}^n$. So the action of
  $\omega$ is extended to $q$, and we can define
  $\phi(q)=\omega^{-1}\psi(q)$. We now show that this extension of
  $\phi$ is a congruence. Let $p,q\in S$: we aim to prove that
  $pq=\phi(p)\phi(q)$. Consider the set $U=S'\cup\{p,q\}$: since
  $|U|\le n+3$, by (b) there is $W\subset\mathbb{R}^n$ with $|W|=|U|$
  such that $U\approx_\psi W$. As above, we note that there is a
  subset $W'\subseteq W$ such that $|W'|=n+1$ and $T'\approx_\omega
  W'$, that $\omega\phi(r)=\psi(r)$ for each $r\in S'$, and that
  $\omega$ is a motion of $\mathbb{R}^n$. Hence
  $pq=\psi(p)\psi(q)=\omega^{-1}\phi(p)\omega^{-1}\phi(q)=\phi(p)\phi(q)$,
  as claimed.
\end{proof}

\subsection{An intuitive interpretation}
\label{s:mengerintuitive}
Although we stated initially that part of the the beauty of the formal
treatment of geometry is that it is based on symbolic manipulation
rather than visual intuition, we quote from a survey paper which
Menger himself wrote (in Italian, with the help of L.~Geymonat) to
disseminate the work carried out at his seminar \cite{mengerS}.
\begin{quote}
{\it Affinch\'e uno spazio metrico reale $R$ sia applicabile a un
  insieme parziale di $\mathbb{R}^n$ \`e necessario e sufficiente che
  per ogni $n+3$ e per ogni $n+2$ punti di esso sia $\Delta=0$ e
  inoltre che ogni $n+1$ punti di $R$ siano applicabili a punti di
  $\mathbb{R}^n$.} 
\end{quote}
The translation is ``a real metric space $R$ is embeddable in a subset
of $\mathbb{R}^n$ if and only if $\Delta=0$ for each $(n+3)$- and
$(n+2)$-point subsets or $R$, and that each $(n+1)$-point subset of
$R$ is embeddable in $\mathbb{R}^n$.''

Since we know that $\Delta$, the Cayley-Menger determinant of the
pairwise distances of a set $S$ of points (see Eq.~\eqref{eq:cm0}), is
proportional to the volume of the simplex on $S$ embedded in $|S|-1$
dimensions, what Menger is saying is that his result on the congruence
order of Euclidean spaces can be intuitively interpreted as follows.
\begin{quote}
An abstract semi-metric space $R$ is congruently embeddable in
$\mathbb{R}^n$ if and only if: (i) there are $n+1$ points in $R$ which
are congruently embeddable in $\mathbb{R}^n$; (ii) the volume of the
simplex on each $n+2$ points of $R$ is zero; (iii) the volume of the
simplex on each $n+3$ points of $R$ is zero.
\end{quote}
This result is exploited in the algorithm for computing point
positions from distances given in \cite[p.~2284]{sippl}.

\section{G\"odel on spherical distances}
\label{s:goedel}
Kurt G\"odel's name is attached to what is possibly the most
revolutionary result in all of mathematics, i.e.~G\"odel's
incompleteness theorem, according to which any formal axiomatic system
sufficient to encode the integers is either inconsistent (it proves
$A$ and $\neg A$) or incomplete (there is some true statement $A$
which the system cannot prove). This shattered Hilbert's dream of a
formal system in which every true mathematical statement could be
proved. Few people know that G\"odel, who attended the Vienna Circle,
Menger's course in geometry, and Menger's seminar, also contributed
two results which are completely outside of the domain of logic. These
results only appeared in the proceedings of Menger's seminar
\cite{mengerK}, and concern DG on a spherical surface.

\subsection{Four points on the surface of a sphere}
The result we discuss here is a proof to the following theorem,
conjectured at a previous seminar session by Laura Klanfer. We remark
that a sphere in $\mathbb{R}^3$ is a semi-metric space whenever it is
endowed with a distance corresponding to the length of a geodesic
curve joining two points.
\begin{thm}[G\"odel \cite{goedelDG1}]
  Given a semi-metric space $S$ of four points, congruently embeddable
  in $\mathbb{R}^3$ but not $\mathbb{R}^2$, is also congruently
  embeddable on the surface of a sphere in
  $\mathbb{R}^3$. \label{t:goedel}
\end{thm}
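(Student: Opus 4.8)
The plan is to realize the four given points $p_1,p_2,p_3,p_4\in\mathbb{R}^3$ on a sphere by a continuous ``inflation'' argument. Since $S$ embeds in $\mathbb{R}^3$ but not $\mathbb{R}^2$, the four points are affinely independent, hence form a genuine (non-degenerate) tetrahedron with positive volume; in particular the six pairwise Euclidean distances $d_{ij}$ satisfy all strict Cayley--Menger inequalities. First I would parametrize a family of spheres of radius $\rho$ (for $\rho$ ranging over $(\rho_0,\infty)$, where $\rho_0$ is the circumradius of the tetrahedron), and on each such sphere try to place four points whose \emph{geodesic} (great-circle-arc) distances match the target $d_{ij}$.

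The key steps, in order. (1) Fix the combinatorial/angular data: for each pair $\{i,j\}$, the geodesic distance on a sphere of radius $\rho$ between two points is $\rho\,\theta_{ij}$ where $\theta_{ij}\in[0,\pi]$ is the central angle, and the corresponding chord length is $2\rho\sin(\theta_{ij}/2)$. So requiring geodesic distance $d_{ij}$ forces $\theta_{ij}=d_{ij}/\rho$ and hence chord length $c_{ij}(\rho)=2\rho\sin\!\bigl(d_{ij}/(2\rho)\bigr)$. (2) Observe the limiting behaviour: as $\rho\to\infty$, $c_{ij}(\rho)\to d_{ij}$ (the sphere flattens to the tangent plane and geodesic distance $\to$ Euclidean distance), so the chord lengths approach the original, realizable configuration; whereas as $\rho\downarrow\rho_0$ the configuration degenerates. (3) The core claim is then: there exists some $\rho$ for which the six chord lengths $c_{ij}(\rho)$ are exactly the chord lengths of four \emph{concyclic-on-a-$\rho$-sphere} points, i.e.\ of four points lying on a sphere of radius $\rho$ in $\mathbb{R}^3$. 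Concretely, four points in $\mathbb{R}^3$ lie on a common sphere of radius $\rho$ iff their $5\times 5$ Cayley--Menger determinant vanishes when bordered appropriately — equivalently, iff the circumradius of the tetrahedron with edge lengths $c_{ij}(\rho)$ equals $\rho$. So define $g(\rho) = R\bigl(c_{ij}(\rho)\bigr) - \rho$, where $R(\cdot)$ is the circumradius of the tetrahedron with the given edge lengths (a continuous function of the edges where it is defined, expressible via Cayley--Menger determinants). (4) Apply the intermediate value theorem to $g$: as $\rho\to\infty$, $c_{ij}(\rho)\to d_{ij}$ so $R(c_{ij}(\rho))\to R(d_{ij})=\rho_0$, a finite constant, hence $g(\rho)\to-\infty$; and as $\rho\downarrow\rho_0$ one checks (this is the delicate estimate) that $g(\rho)>0$, or at least that $g$ is positive somewhere in the admissible range. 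Continuity then yields $\rho^*$ with $g(\rho^*)=0$, and the four points with edge lengths $c_{ij}(\rho^*)$ sit on a sphere of radius $\rho^*$ with the prescribed geodesic distances, which is the desired congruent embedding.

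The main obstacle I expect is step (4) near the small-$\rho$ end: one must verify that the tetrahedron with edge lengths $c_{ij}(\rho)$ is actually non-degenerate (positive Cayley--Menger volume) throughout the relevant interval — so that $R(\cdot)$ is defined and continuous — and that $g$ genuinely changes sign. Showing $g(\rho)>0$ for $\rho$ near $\rho_0$ (or more honestly, pinning down the right endpoint of the admissible interval and the sign of $g$ there) requires a careful analysis of how the chord lengths $c_{ij}(\rho)=2\rho\sin(d_{ij}/2\rho)$ interact with the circumradius function; the monotonicity of $2\rho\sin(t/\rho)$ in $\rho$ for fixed $t$ helps, but one still has to rule out the degenerate/flat case and control the simultaneous scaling of all six edges. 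An alternative, possibly cleaner, route is to work directly in the $3$-parameter space of sphere radius plus the configuration, set up the geodesic-distance equations, and invoke a degree or connectedness argument using property \ref{BP2} (uniqueness of a point at prescribed distances from an independent triple) to reduce the genuinely free parameters to one, then finish with the same intermediate-value argument on $\rho$.
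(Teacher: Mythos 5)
Your proposal is, up to the reparametrization $\rho=1/x$, exactly the argument the paper attributes to G\"odel: form the chord lengths $c_{ij}(\rho)=2\rho\sin\bigl(d_{ij}/(2\rho)\bigr)$, build the tetrahedron with those edge lengths, compare its circumradius $R$ with $\rho$, and locate a radius where the two agree by the intermediate value theorem; the limit $\rho\to\infty$ (the paper's $x\to 0$) supplies one sign of $g(\rho)=R\bigl(c_{ij}(\rho)\bigr)-\rho$ in both treatments. The genuine gap is the one you flag yourself: the sign of $g$ at the other end of the admissible interval. Two things are off in your step (4). First, the admissible interval is not $(\rho_0,\infty)$ with $\rho_0$ the Euclidean circumradius: the natural barrier is $\rho\ge a'/\pi$ with $a'=\max_{ij}d_{ij}$, needed so that the central angles $d_{ij}/\rho$ stay in $[0,\pi]$, and there is no reason the chord tetrahedron should degenerate precisely at $\rho_0$, nor that $\rho_0$ lies above $a'/\pi$. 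Second, the positivity of $g$ near the lower end is left as an unproved ``delicate estimate,'' and that is precisely the content of the proof.

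The paper closes this gap with a dichotomy. Let $\bar{\rho}$ be the smallest radius in $[a'/\pi,\infty)$ for which the tetrahedron $\tau$ with edge lengths $c_{ij}(\rho)$ still exists (it is attained, by continuity of the Cayley--Menger volume in the edge lengths). Either $\bar{\rho}=a'/\pi$, in which case the longest edge of $\tau(\bar{\rho})$ has length $c_{\bar{\rho}}(a')=2a'/\pi$, and since every edge is a chord of the circumsphere the circumradius exceeds half the longest edge, giving $R>a'/\pi=\bar{\rho}$ and hence $g(\bar{\rho})>0$; or $\bar{\rho}>a'/\pi$, in which case $\tau(\bar{\rho})$ must be planar (otherwise, by continuity, the family would persist slightly below $\bar{\rho}$, contradicting minimality), so the circumscribing sphere has infinite radius and again $g>0$ there. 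In either case the sign change is secured and the intermediate value theorem finishes the proof. Your architecture is right, but without this case analysis (or a substitute for it) the argument does not close; the alternative degree-theoretic route you sketch at the end is vaguer still and is not what the paper does.
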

G\"odel's proof looks at the circumscribed sphere around a tetrahedron
in $\mathbb{R}^3$, and analyses the relationship of the geodesics,
their corresponding chords, and the sphere radius. It then uses a
fixed point argument to find the radius which corresponds to geodesics
which are as long as the given sides.

\noindent\begin{proof} The congruence embedding of $S$ in
$\mathbb{R}^3$ defines a tetrahedron $T$ having six (straight) sides
with lengths $a_1,\ldots,a_6$.  Let $r$ be the radius of the sphere
circumscribed around $T$ (i.e.~the smallest sphere containing $T$). We
shall now consider a family of tetrahedra $\tau(x)$, parametrized on a
scalar $x>0$, defined as follows: $\tau(x)$ is the tetrahedron in
$\mathbb{R}^3$ having side lengths $c_x(a_1),\ldots,c_x(a_6)$, where
$c_x(\alpha)$ is the length of the chord subtending a geodesic having
length $\alpha$ on a sphere of radius $\frac{1}{x}$. As $x$ tends
towards zero, each $c_x(a_i)$ tends towards $a_i$ (for each $i\le 6$),
since the radius of the sphere tends towards infinity and each
geodesic length tends towards the length of the subtending chord. This
means that $\tau(x)$ tends towards $T$, since $T$ is precisely the
tetrahedron having side lengths $a_1,\ldots,a_6$. For each $x>0$, let
$\phi(x)$ be the inverse of the radius of the sphere circumscribed
about $\tau(x)$. Since $\tau(x)\to T$ as $x\to 0$, and the radius
circumscribed about $T$ is $r$, it follows that
$\phi(x)\to\frac{1}{r}$ as $x\to 0$. Also, since $T$ exists by
hypothesis, we can define $\tau(0)=T$ and $\phi(0)=\frac{1}{r}$. Also
note that it is well known by elementary spherical geometry that:
\begin{equation}
  c_x(\alpha) = \frac{2}{x}\sin\frac{\alpha x}{2}. \label{eq:cx}
\end{equation}
\underline{Claim}: if $a'=\max\{a_1,\ldots,a_6\}$ then $\phi$ has a
fixed point in the open interval
$I=(0,\frac{\pi}{a'})$. \\ \underline{\it Proof of the claim}. First
of all notice that $\tau(0)$ exists, and $c_x(\alpha)$ is a continuous
function for $x>0$ for each $\alpha$ (by Eq.~\eqref{eq:cx}). Since
$\tau(x)$ is defined by the chord lengths $c_x(a_1),\ldots,c_x(a_6)$,
this also means that $\tau(x)$ varies continuously for $x$ in some
open interval $J=(0,\varepsilon)$ (for some constant $\varepsilon>0$).
In turn, this implies that $\bar{x}=\max \{y\in I \;|\;\tau(y)\mbox{
  exists}\}$ exists by continuity. There are two cases: either
$\bar{x}$ is at the upper extremum of $I$, or it is not.
\begin{itemize}
  \setlength{\parskip}{-0.2em}
  \item[(i)] If $\bar{x}=\frac{\pi}{a'}$, then $\tau(\bar{x})$ exists,
    its longest edge has length $c_{\bar{x}}(a')=\frac{2a'}{\pi}$, so,
    by elementary spherical geometry, the radius of the sphere
    circumscribed around $\tau(\bar{x})$ is greater than
    $\frac{c_{\bar{x}}(a')}{2}$, i.e.~greater than
    $\frac{a'}{\pi}=\frac{1}{\bar{x}}$. Thus
    $\phi(\bar{x})<\bar{x}$. We also have, however, that
    $\phi(0)=\frac{1}{r}>0$, so by the intermediate value theorem
    there must be some $x\in(0,\bar{x})$ with $\phi(x)=x$.
  \item[(ii)] Assume now $\bar{x}<\frac{\pi}{a'}$ and suppose
    $\tau(\bar{x})$ is non-planar. Then for each $y$ in an arbitrary
    small neighbourhood around $\bar{x}$, $\tau(y)$ must exist by
    continuity: in particular, there must be some $y>\bar{x}$ where
    $\tau(y)$ exists, which contradicts the definition of
    $\bar{x}$. So $\tau(\bar{x})$ is planar: this means that each
    geodesic is contained in the same plane, which implies that the
    geodesics are linear segments. It follows that the circumscribed
    sphere has infinite radius, or, equivalently, that
    $\phi(\bar{x})=0<\bar{x}$. Again, by $\phi(0)>0$ and the
    intermediate value theorem, there must be some $x\in(0,\bar{x})$
    with $\phi(x)=x$.
\end{itemize}
This concludes the proof of the claim. \\ So now let $y$ be the fixed
point of $\phi$. The tetrahedron $\tau(y)$ has side lengths $c_y(a_i)$
for each $i\le 6$, and is circumscribed by a sphere $\sigma$ with
radius $\frac{1}{y}$. It follows that, on the sphere $\sigma$, the
geodesics corresponding to the chords given by the tetrahedron sides
have lengths $a_i$ (for $i\le 6$), as claimed.
\end{proof}

\subsection{G\"odel's devilish genius}
\label{s:genius}
G\"odel's proof exhibits an unusual peak of devilish genius. At first
sight, it is a one-dimensional fixed-point argument which employs a
couple of elementary notions in spherical geometry. Underneath the
surface, the fixed-point argument eschews a misleading visual
intuition.

$T$ is a given tetrahedron in $\mathbb{R}^3$ which is assumed to be
non-planar and circumscribed by a sphere of finite positive radius
$r$ (see Fig.~\ref{f:goedel}, left).
\begin{figure}[!ht]
  \begin{center}
    \psfrag{a1}{$a_1$}
    \psfrag{a2}{$a_2$}
    \psfrag{a3}{$a_3$}
    \psfrag{a4}{$a_4$}
    \psfrag{a5}{$a_5$}
    \psfrag{a6}{$a_6$}
    \psfrag{T}{$T$}
    \psfrag{T(x)}{$\tau(x)$}
    \psfrag{1/x}{$\frac{1}{\phi(x)}$}
    \includegraphics[width=14cm]{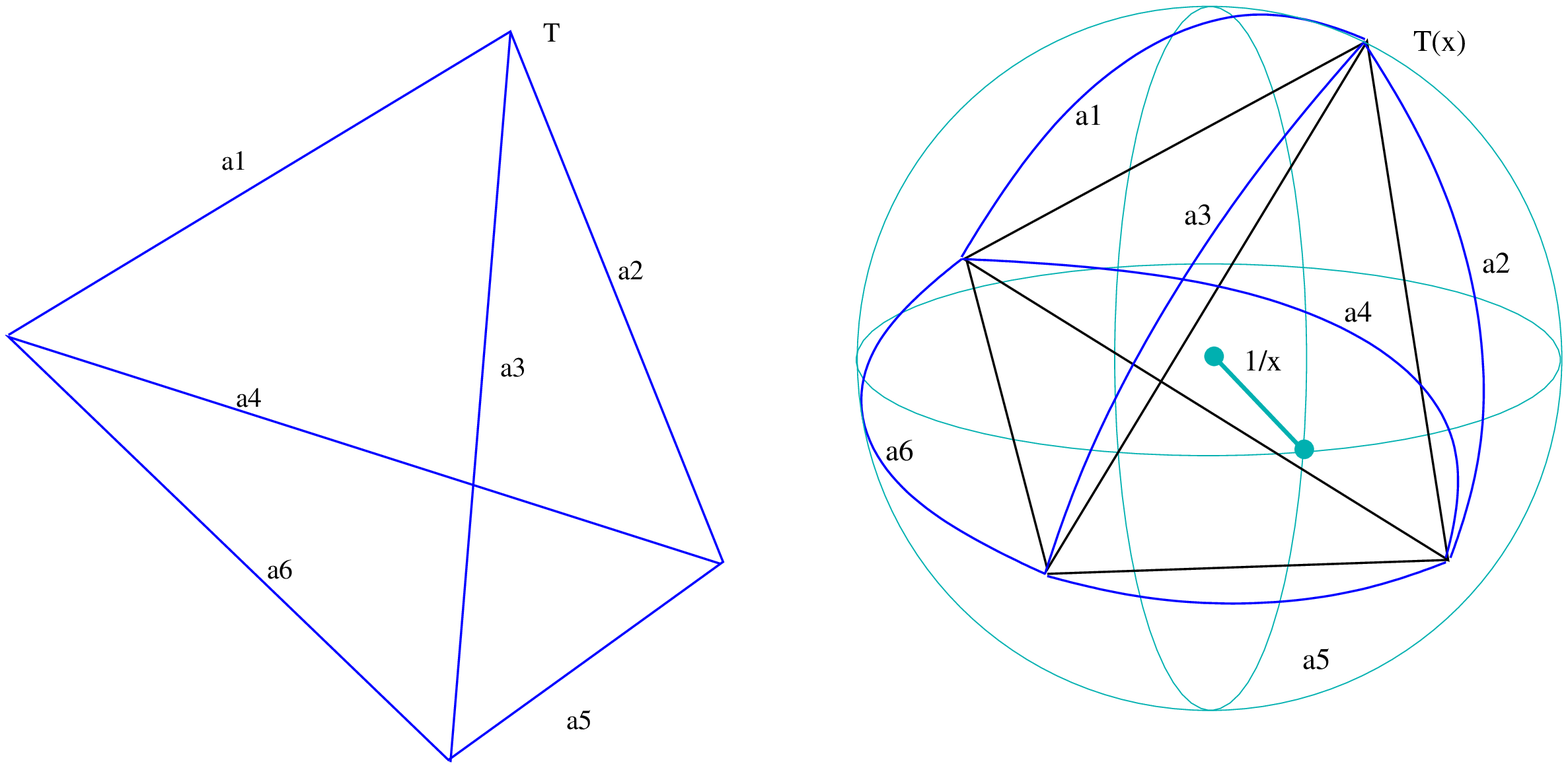}
  \end{center}
  \caption{The given tetrahedron $T$ (left), and the tetrahedron
    $\tau(x)$ (right). Beware of this visual interpretation: it may
    yield misleading insights (see Sect.~\ref{s:genius}).}
  \label{f:goedel}
\end{figure}
The map $\tau$ sends a scalar $x$ to the tetrahedron having as side
lengths the chords subtending the geodesics of length $a_i$ ($i\le 6$)
on a sphere of radius $\frac{1}{x}$ (see Fig.~\ref{f:goedel},
right). The map $\tau$ is such that $\tau(0)=T$ since for $x=0$ the
radius is infinite, which means that the geodesics are equal to their
chords. Moreover, the map $\phi$ sends $x$ to the inverse of the
radius of the sphere circumscribing $\tau(x)$. Since every geodesic on
the sphere is a portion of a great circle, it would appear from
Fig.~\ref{f:goedel} (right) that the radius $\frac{1}{x}$ used to
compute $c_x(a_i)$ ($i\le 6$) is the same as the radius
$\frac{1}{\phi(x)}$ of the sphere circumscribing $\tau(x)$, which
would immediately yield $\phi(x)=x$ for every $x$ --- making the proof
trivial. There is something inconsistent, however, in the visual
interpretation of Fig.~\ref{f:goedel}: the given tetrahedron $T$
corresponds to the case $\tau(x)=T$, which happens when $x=0$,
i.e.~the radius of the sphere circumscribed around $T$ is
$\infty$. But this would yield $T$ to be a planar tetrahedron, which
is a contradiction with an assumption of the theorem. Moreover, if
$\phi(x)$ were equal to $x$ for each $x$, this would yield
$0=\phi(0)=\frac{1}{r}>0$, another contradiction.


The misleading concept is hidden in the picture in Fig.~\ref{f:goedel}
(right). It shows a tetrahedron inscribed in a sphere, and a spherical
tetrahedron {\it on the same vertices}. This is not true in general,
i.e.~the spherical tetrahedron with the given curved side lengths
$a_1,\ldots,a_6$ cannot, in general, be embedded in the surface of a
sphere of {\it any} radius. For example, the case $x=0$ yields
geodesics with infinite curvatures (i.e.~straight lines laying in a
plane), but $\phi(x)=\frac{1}{r}>0$, and there is no flat tetrahedron
with the same distances as those of $T$. The sense of G\"odel's proof
is that the function $c_x$ simply transforms a set of geodesic
distances into a set of linear distances, i.e.~it maps scalars to
scalars rather than geodesics to segments, whereas Fig.~\ref{f:goedel}
(right) shows the special case where the geodesics are mapped to the
corresponding segments, with intersections at the same points (namely
the distances $a_1,\ldots,a_6$ can be embedded on the particular
sphere shown in the picture). More specifically, the geodesic curves
may or may not be realizable on a sphere of radius
$\frac{1}{\phi(x)}$. G\"odel's proof shows exactly that there must be
some $x$ for which $\phi(x)=x$, i.e.~the geodesic curves become
realizable.

\subsection{Existential vs.~constructive proofs}
Like many existential proofs based on fixed-point
theorems,\footnote{Interestingly, G\"odel's famous incompleteness
  theorem is also a fixed-point argument (in a much more complicated
  set).} this proof is beautiful because it asserts the truth of the
theorem without any certificates other than its own logical
validity. An alternative, constructive proof of Thm.~\ref{t:goedel} is
given in \cite[Thm.~3']{schoenberg}. The tools used in that proof,
Cayley-Menger determinants and positive semidefiniteness, are
discussed in Sect.~\ref{s:schoenberg} below.

\section{The equivalence of EDM and PSD matrices}
\label{s:schoenberg}
Many fundamental innovations stem from what are essentially footnotes
to apparently deeper or more important work. Isaac Schoenberg, better
known as the inventor of splines \cite{splines}, published a paper in
1935 titled {\it Remarks to Maurice Fr\'echet's article ``Sur la
  d\'efinition axiomatique d'une classe d'espace distanci\'es
  vectoriellement applicable sur l'espace de Hilbert''}
\cite{schoenberg}. The impact of Schoenberg's remarks far exceeds that
of the original paper\footnote{A not altogether dissimilar situation
  arose for the Johnson-Lindenstrauss (JL) lemma \cite{jllemma}: the
  paper is concerned with extending a mapping from $n$-point subsets
  of a metric space to the whole metric space in such a way that the
  Lipschitz constant of the extension is bounded by at most a constant
  factor. Johnson and Lindenstrauss state on page 1 that ``The main
  tool for proving Theorem 1 is a simply stated elementary geometric
  lemma''. This lemma is now known as the {\it JL lemma}, and
  postulates the existence of low-distortion projection matrices which
  map to Euclidean spaces of logarithmically fewer dimensions. The
  impact of the lemma far exceeds that of the main result.}: these
remarks encode what amounts to the basis of the well-known MDS
techniques for visualizing high-dimensional data \cite{coxcox}, as
well as all the solution techniques for Distance Geometry Problems
(DGP) based on Semidefinite Programming (SDP) \cite{ye,wolkowicz}.

\subsection{Schoenberg's problem}
Schoenberg poses the following problem, relevant to Menger's treatment
of distance geometry \cite[p.~737]{menger31}.
\begin{quote}
  Given an $n\times n$ symmetric matrix $D$, what are necessary and
  sufficient conditions such that $D$ is a EDM corresponding to $n$
  points in $\mathbb{R}^r$, with $1\le r\le n$ minimum?
\end{quote}
Menger's solution is based on Cayley-Menger determinants; Schoenberg's
solution is much simpler and more elegant, and rests upon the
following theorem. Recall that a matrix is PSD if and only if all its
eigenvalues are nonnegative.
\begin{thm}[Schoenberg \cite{schoenberg}]
The $n\times n$ symmetric matrix $D=(d_{ij})$ is the EDM of a set of
$n$ points $x=\{x_1,\ldots,x_n\}\subset\mathbb{R}^{r}$ (with $r$
minimum) if and only if the matrix
$G=\frac{1}{2}(d_{1i}^2+d_{1j}^2-d_{ij}^2\;|\;2\le i,j\le n)$ is PSD
of rank $r$.\label{t:schoenberg}
\end{thm}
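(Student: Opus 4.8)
The plan is to exploit the standard bridge between a Euclidean distance matrix and a Gram matrix obtained by pinning one point at the origin. Concretely, suppose first that $D$ is the EDM of points $x_1,\ldots,x_n\in\mathbb{R}^r$ with $r$ minimum; without loss of generality we may translate so that $x_1=0$. Set $y_i=x_i$ for $2\le i\le n$ and form the $(n-1)\times(n-1)$ matrix $G=(\langle y_i,y_j\rangle)$. Because $d_{ij}^2=\|x_i-x_j\|^2=\|y_i\|^2-2\langle y_i,y_j\rangle+\|y_j\|^2$ and $d_{1i}^2=\|y_i\|^2$ (using $x_1=0$), a one-line rearrangement gives $\langle y_i,y_j\rangle=\tfrac12(d_{1i}^2+d_{1j}^2-d_{ij}^2)$, which is exactly the stated formula for $G$. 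Since $G=YY^\top$ where $Y$ is the matrix whose rows are $y_2,\ldots,y_n\in\mathbb{R}^r$, $G$ is symmetric PSD; and $\operatorname{rank}G=\operatorname{rank}Y$ equals the dimension of the affine hull of $\{x_1,\ldots,x_n\}$, which is $r$ by minimality of $r$.

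For the converse, suppose the matrix $G$ defined by that formula is symmetric PSD of rank $r$. By the spectral theorem write $G=Q\Lambda Q^\top$ with $\Lambda$ diagonal, nonnegative, having exactly $r$ nonzero entries; then $G=YY^\top$ with $Y=Q\Lambda^{1/2}$, and we may discard the zero columns so that $Y$ has $r$ columns. Let $y_2,\ldots,y_n\in\mathbb{R}^r$ be the rows of $Y$, and set $x_1=0$, $x_i=y_i$ for $i\ge 2$. Then for $2\le i,j\le n$ we recover $\|x_i-x_j\|^2=G_{ii}-2G_{ij}+G_{jj}=\tfrac12(d_{1i}^2+d_{1i}^2-d_{ii}^2)-\cdots=d_{ij}^2$ after substituting the defining expression for the entries of $G$ and cancelling (note $d_{ii}=0$, so $G_{ii}=d_{1i}^2$), and $\|x_i-x_1\|^2=\|y_i\|^2=G_{ii}=d_{1i}^2$. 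Hence $D$ is the EDM of $x_1,\ldots,x_n$. Finally $r$ is minimal: the points span an $r$-dimensional affine subspace because $\operatorname{rank}Y=\operatorname{rank}G=r$, so they cannot be realized in any $\mathbb{R}^{r'}$ with $r'<r$.

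The only genuinely substantive point — and the one I expect to dwell on — is the equivalence ``$r$ minimal for the EDM realization'' $\iff$ ``$\operatorname{rank}G=r$'', which amounts to the fact that the minimal embedding dimension of a point set equals the dimension of its affine hull, which in turn (after translating $x_1$ to the origin) equals the rank of the matrix of the vectors $x_2-x_1,\ldots,x_n-x_1$, i.e.\ $\operatorname{rank}(YY^\top)=\operatorname{rank}G$. Everything else is the bookkeeping identity $d_{ij}^2=G_{ii}-2G_{ij}+G_{jj}$ together with the Cholesky/spectral factorization of a PSD matrix; the hypothesis in the theorem that a matrix ``is PSD iff all eigenvalues are nonnegative'' is exactly what licenses the factorization $G=YY^\top$. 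I would present the two directions as above, flagging the translation $x_1=0$ as the normalization that makes the formula for $G$ drop out, and isolating the rank-equals-dimension claim as the conceptual heart of the argument.
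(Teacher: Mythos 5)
Your proof is correct, but it takes a genuinely different (and in one respect more faithful) route than the paper's. You normalize by pinning $x_1$ at the origin, so that the $(n-1)\times(n-1)$ matrix $G$ of the statement is \emph{literally} the Gram matrix of $x_2-x_1,\ldots,x_n-x_1$; the polarization identity $\langle y_i,y_j\rangle=\tfrac12(d_{1i}^2+d_{1j}^2-d_{ij}^2)$ gives the forward direction, and the spectral factorization $G=Y\transpose{Y}$ gives the converse, with the rank-equals-affine-dimension observation handling minimality of $r$. The paper instead normalizes by placing the \emph{barycenter} at the origin and, after summing the identity $d_{ij}^2=x_ix_i+x_jx_j-2x_ix_j$ over rows and columns, arrives at the $n\times n$ doubly-centered matrix $G=-\tfrac12 JD^2J$ with $J=I_n-\tfrac{1}{n}\mathbf{1}\transpose{\mathbf{1}}$, before invoking the same ``Gram iff PSD'' equivalence. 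Your version proves the theorem exactly as stated (the paper's $G$ never reappears in its own argument) and is complete where the paper only verifies the rank and PSD properties of a Gramian in the special case $r=1$; the paper's version buys the direct link to classical MDS, where the symmetric treatment of all $n$ points (no distinguished $x_1$) is what one wants when $D$ is noisy. One small point worth making explicit in your write-up: the converse tacitly requires $D$ to have zero diagonal and nonnegative entries, so that $G_{ii}=d_{1i}^2$ and the reconstruction $\|x_i-x_1\|=d_{1i}$ actually returns $D$ --- a hypothesis the theorem statement leaves implicit (as does the paper).
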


Instead of providing Schoenberg's proof, we follow a more modern
treatment, which also unearths the important link of this theorem with
classical MDS \cite[\S~2.2.1]{coxcox}, an approximate method for
finding sets of points $x=\{x_1,\ldots,x_n\}$ having EDM
which approximates a given symmetric matrix. MDS is one of the
cornerstones of the modern science of data analysis.

\subsection{The proof of Schoenberg's theorem}
\label{s:schthm}
Given a set $x=\{x_1,\ldots,x_n\}$ of points in $\mathbb{R}^r$, we can
write $x$ as an $r\times n$ matrix having $x_i$ as $i$-th column. The
matrix $G=x\transpose{x}$ having the scalar product $x_ix_j$ as its
$(i,j)$-th component is called the {\it Gram matrix} or {\it Gramian}
of $x$. The proof of Thm.~\ref{t:schoenberg} works by exhibiting a 1-1
correspondence between squared EDMs and Gram matrices, and then by
proving that a matrix is Gram if and only if it is PSD.

Without loss of generality, we can assume that the barycenter of the
points in $x$ is at the origin:
\begin{equation}
  \sum_{i\le n} x_i = 0. \label{eq:mds0}
\end{equation}
Now we remark that, for each $i,j\le n$, we have:
\begin{equation}
  d^2_{ij} =
  \|x_i-x_j\|^2=(x_i-x_j)(x_i-x_j)=x_ix_i+x_jx_j-2x_ix_j. \label{eq:mds1}
\end{equation}

\subsubsection{The Gram matrix in function of the EDM}
We ``invert'' Eq.~\eqref{eq:mds1} to compute the matrix
$G=x\transpose{x}=(x_i x_j)$ in function of the matrix
$D^2=(d_{ij}^2)$. We sum Eq.~\eqref{eq:mds1} over all values of
$i\in\{1,\ldots,n\}$, obtaining:
\begin{equation}
  \sum_{i\le n} d^2_{ij} = \sum_{i\le n} (x_i x_i) + n (x_j x_j)
  - 2\left(\sum_{i\le n} x_i\right)  x_j. \label{eq:mds2}
\end{equation}
By Eq.~\eqref{eq:mds0}, the negative term in the right hand side of
Eq.~\eqref{eq:mds2} is zero. On dividing through by $n$, we have
\begin{equation}
  \frac{1}{n} \sum_{i\le n} d^2_{ij} = \frac{1}{n} \sum_{i\le n}
  (x_i x_i) + x_j x_j. \label{eq:mds3}
\end{equation}
Similarly for $j\in\{1,\ldots,n\}$, we obtain:
\begin{equation}
  \frac{1}{n} \sum_{j\le n} d^2_{ij} = x_i x_i + \frac{1}{n} \sum_{j\le n}
  (x_j x_j). \label{eq:mds4}
\end{equation}
We now sum Eq.~\eqref{eq:mds3} over all $j$, getting:
\begin{equation}
  \frac{1}{n} \sum_{i\le n\atop j\le n} d^2_{ij} =
  n\frac{1}{n}\sum_{i\le n} (x_i x_i) + \sum_{j\le n} (x_j
  x_j) = 2\sum_{i\le n} (x_i x_i) \label{eq:mds5}
\end{equation}
(the last equality in Eq.~\eqref{eq:mds5} holds because the same
quantity $f(k)=x_k x_k$ is being summed over the same range
$\{1,\ldots,n\}$, with the symbol $k$ replaced by the symbol $i$ first
and $j$ next). We then divide through by $n$ to get:
\begin{equation}
  \frac{1}{n^2} \sum_{i\le n\atop j\le n} d^2_{ij} = \frac{2}{n}
  \sum_{i\le n} (x_i x_i). \label{eq:mds6}
\end{equation}

We now rearrange Eq.~\eqref{eq:mds1}, \eqref{eq:mds4},
\eqref{eq:mds3} as follows:
\begin{eqnarray}
  2 x_i x_j &=& x_i x_i + x_j x_j - d_{ij}^2 
  \label{eq:mds7} \\
  x_i x_i &=& \frac{1}{n} \sum_{j\le n} d^2_{ij} - \frac{1}{n} \sum_{j\le n}
  (x_j x_j) \label{eq:mds8} \\
  x_j x_j &=& \frac{1}{n} \sum_{i\le n} d^2_{ij} - \frac{1}{n} \sum_{i\le n}
  (x_i x_i), \label{eq:mds9} 
\end{eqnarray}
and replace the left hand side terms of
Eq.~\eqref{eq:mds8}-\eqref{eq:mds9} into Eq.~\eqref{eq:mds7} to
obtain:
\begin{equation}
  2 x_i x_j = \frac{1}{n} \sum_{k\le n} d^2_{ik} +
  \frac{1}{n} \sum_{k\le n} d^2_{kj} - d_{ij}^2 - \frac{2}{n} \sum_{k\le n}
  (x_k x_k),  
  \label{eq:mds10}   
\end{equation}
whence, on substituting the last term using Eq.~\eqref{eq:mds6}, we
have:
\begin{equation}
  2 x_i x_j = \frac{1}{n} \sum_{k\le n} (d^2_{ik} + d^2_{kj}) -
  d_{ij}^2 - \frac{1}{n^2} \sum_{h\le n\atop k\le n} d^2_{hk}.
  \label{eq:mds11}   
\end{equation}

It turns out that Eq.~\eqref{eq:mds11} can be written in matrix form
as:
\begin{equation}
   G = -\frac{1}{2} J D^2 J, \label{eq:mds12}
\end{equation}
where $J=I_n-\frac{1}{n}\mathbf{1}\transpose{\mathbf{1}}$ and
$\mathbf{1}=\underbrace{(1,\ldots,1)}_n$. 

\subsubsection{Gram matrices are PSD matrices}
Any Gram matrix $G=x\transpose{x}$ derived by a point sequence (also
called a {\it realization}) $x=(x_1,\ldots,x_n)$ in $\mathbb{R}^K$ for
some non-negative integer $K$ has two important properties: (i) the
rank of $G$ is equal to the rank of $x$; and (ii) $G$ is PSD,
i.e.~$\transpose{y} G y\ge 0$ for all $y\in\mathbb{R}^n$. For
simplicity, we only prove these properties in the case when
$x=(x_1,\ldots,x_n)$ is a $1\times n$ matrix, i.e.~$x\in\mathbb{R}^n$,
and $x_i$ is a scalar for all $i\le n$ (this is the case $r=1$ in
Schoenberg's problem above).
\begin{itemize}
\item[(i)] The $i$-th column of $G$ is the vector $x$ multiplied by
  the scalar $x_i$, which means that every column of $G$ is a scalar
  multiple of a single column vector, and hence that $\mbox{\sf
    rk}\,G=1$;
\item[(ii)] For any vector $y$, $\transpose{y} G
  y=\transpose{y}(x\transpose{x}) y = (\transpose{y} x)(\transpose{x}
  y)=(\transpose{x} y)^2\ge 0$.
\end{itemize}
Moreover, $G$ is a Gram matrix {\it only if} it is PSD. Let $M$ be a
PSD matrix. By spectral decomposition there is a unitary matrix $Y$
such that $M=Y \Lambda \transpose{Y}$, where $\Lambda$ is diagonal. By
positive semidefiniteness, $\Lambda_{ii}\ge 0$ for each $i$, so
$\sqrt{Y\Lambda}$ exists. Hence
$M=\sqrt{Y\Lambda}\transpose{(\sqrt{Y\Lambda})}$, which makes $M$ the
Gram matrix of the vector $\sqrt{Y\Lambda}$. This concludes the proof
of Thm.~\ref{t:schoenberg}.

\subsection{Finding the realization of a Gramian}
Having computed the Gram matrix $G$ from the EDM $D$ in
Sect.~\ref{s:schthm}, we obtain the corresponding
realization\index{realization} $x$ as follows. This is essentially the
same reasoning used above to show the equivalence of Gramians and PSD
matrices, but we give a few more details.

Let $\Lambda=\mbox{\sf diag}(\lambda_1,\ldots,\lambda_r)$ be the
$r\times r$ matrix with the eigenvalues
$\lambda_1\ge\ldots\ge\lambda_r$ along the diagonal and zeroes
everywhere else, and let $Y$ be the $n\times r$ matrix having the
eigenvector corresponding to the eigenvalue $\lambda_j$ as its $j$-th
column (for $j\le r$), chosen so that $Y$ consists of orthogonal
columns. Then $G=Y\Lambda\transpose{Y}$. Since $\Lambda$ is
a diagonal matrix and all its diagonal entries are nonnegative (by
positive semidefiniteness of $G$), we can write $\Lambda$ as
$\sqrt{\Lambda}\sqrt{\Lambda}$, where $\sqrt{\Lambda}=\mbox{\sf
  diag}(\sqrt{\lambda_1},\ldots,\sqrt{\lambda_r})$. Now, since
$G=x\transpose{x}$,
\begin{equation*}
x\transpose{x} =
  (Y\sqrt{\Lambda})(\sqrt{\Lambda}\transpose{Y}), 
\end{equation*}
which implies that 
\begin{equation}
x = Y\sqrt{\Lambda} \label{eq:mds13}
\end{equation}
is a realization of $G$ in $\mathbb{R}^r$. 

\subsection{Multidimensional Scaling}
MDS can be used to find realizations of approximate distance matrices
$\tilde{D}$. As above, we compute $\tilde{G}=-\frac{1}{2} J\tilde{D}^2
J$. Since $\tilde{D}$ is not a EDM, $\tilde{G}$ will probably fail to
be a Gram matrix, and as such might have negative eigenvalues. But it
suffices to let $Y$ be the eigenvectors corresponding to the $H$
positive eigenvalues $\lambda_1,\ldots,\lambda_H$, to recover an
approximate realization $x$ of $\tilde{D}$ in $\mathbb{R}^H$.

Another interesting feature of MDS is that the dimensionality $H$ of
the ambient space of $x$ is actually determined by $D$ (or
$\tilde{D}$) rather than given as a problem input. In other words, MDS
finds the ``inherent dimensionality'' of a set of (approximate)
pairwise distances.

\section{Conclusion}
\label{s:conc}
We presented what we feel are the most important and/or beautiful
theorems in DG (Heron's, Cauchy's, Cayley's, Menger's, G\"odel's and
Schoenberg's). Three of them (Heron's, Cayley's, Menger's) have to do
with the volume of simplices given its side lengths, which appears to
be {\it the} central concept in DG. We think Cauchy's proof is as
beautiful as a piece of classical art, whereas G\"odel's proof, though
less important, is stunning. Last but not least, Schoenberg's theorem
is the fundamental link between the history of DG and its contemporary
treatment.

\section*{Acknowledgments}
The first author (LL) worked on this paper whilst working at IBM TJ
Watson Research Center, and is very grateful to IBM for the freedom he
was afforded. The second author (CL) is grateful to the Brazilian
research agencies FAPESP and CNPq.


\bibliographystyle{plain} \bibliography{ibmer}

\begin{thebibliography}{10}

\bibitem{alexandrov2}
A.~Alexandrov.
\newblock {\em Combinatorial topology}, volume~1.
\newblock Graylock Press, Rochester, 1956.

\bibitem{alexandrov3}
A.~Alexandrov.
\newblock {\em Convex Polyhedra}.
\newblock Springer, Berlin, 2005 (translated from Russian ed.~1950).

\bibitem{wolkowicz}
A.~Alfakih, A.~Khandani, and H.~Wolkowicz.
\newblock Solving {E}uclidean distance matrix completion problems via
  semidefinite programming.
\newblock {\em Computational Optimization and Applications}, 12:13--30, 1999.

\bibitem{blumenthal}
L.~Blumenthal.
\newblock {\em Theory and Applications of Distance Geometry}.
\newblock Oxford University Press, Oxford, 1953.

\bibitem{borg_10}
I.~Borg and P.~Groenen.
\newblock {\em Modern Multidimensional Scaling}.
\newblock Springer, New York, second edition, 2010.

\bibitem{cauchyrigid}
A.-L. Cauchy.
\newblock Sur les polygones et les poly\`edres.
\newblock {\em Journal de l'\'Ecole Polytechnique}, 16(9):87--99, 1813.

\bibitem{cayley1841}
A.~Cayley.
\newblock A theorem in the geometry of position.
\newblock {\em Cambridge Mathematical Journal}, II:267--271, 1841.

\bibitem{connelly-countereg}
R.~Connelly.
\newblock A counterexample to the rigidity conjecture for polyhedra.
\newblock {\em Publications Math\'ematiques de l'IHES}, 47:333--338, 1978.

\bibitem{connellych}
R.~Connelly.
\newblock Rigidity.
\newblock In P.~Gruber and J.~Wills, editors, {\em Handbook of Convex
  Geometry}. Elsevier, Amsterdam, 1993.

\bibitem{coxcox}
T.~Cox and M.~Cox.
\newblock {\em Multidimensional Scaling}.
\newblock Chapman \& Hall, Boca Raton, 2001.

\bibitem{milesedwards}
M.~Edwards.
\newblock A proof of {H}eron's formula, 2011.
\newblock \url{http://artofproblemsolving.com/Resources/Papers/Heron.pdf}.

\bibitem{eren04}
T.~Eren, D.~Goldenberg, W.~Whiteley, Y.~Yang, A.~Morse, B.~Anderson, and
  P.~Belhumeur.
\newblock Rigidity, computation, and randomization in network localization.
\newblock {\em IEEE Infocom Proceedings}, pages 2673--2684, 2004.

\bibitem{euler}
L.~Euler.
\newblock Solutio problematis ad geometriam situs pertinentis.
\newblock {\em Commentarii Academi{\ae} Scientiarum Imperialis
  Petropolitan{\ae}}, 8:128--140, 1736.

\bibitem{eulerchar}
L.~Euler.
\newblock Letter 140 to {G}oldbach.
\newblock In P.H. von Fuss, editor, {\em Correspondance math\'ematique et
  physique de quelques celebres G\'eom\`etres du XVIII\`eme Si\`ecle},
  volume~1. Imperial Academy of Sciences, St.~Petersburg, 1843.

\bibitem{euler1766}
L.~Euler.
\newblock Continuatio fragmentorum ex adversariis mathematicis depromptorum:
  {II} {G}eometria, 97.
\newblock In P.~Fuss and N.~Fuss, editors, {\em Opera postuma mathematica et
  physica anno 1844 detecta}, volume~I, pages 494--496. Eggers \& C.,
  Petropolis, 1862.

\bibitem{euleraninf}
L.~Euler.
\newblock {\em Introductio in Analysin Infinitorum}, volume~1.
\newblock Teubner, Leipzig, 1922.

\bibitem{gluck}
H.~Gluck.
\newblock Almost all simply connected closed surfaces are rigid.
\newblock In A.~Dold and B.~Eckmann, editors, {\em Geometric Topology}, volume
  438 of {\em Lecture Notes in Mathematics}, pages 225--239, Berlin, 1975.
  Springer.

\bibitem{goedel_compl}
K.~G\"odel.
\newblock Die {V}ollst\"andigkeit der {A}xiome des logischen
  {F}unktionenkalk\"uls.
\newblock {\em Monatshefte f\"ur Mathematik und Physik}, 37:349--360, 1930.

\bibitem{goedel_inc}
K.~G\"odel.
\newblock {\"{U}}ber formal unentscheidbare {S}\"atze der {P}rincipia
  {M}athematica und verwandter {S}ysteme, {I}.
\newblock {\em Monatshefte f\"ur Mathematik und Physik}, 38:173--198, 1930.

\bibitem{goedelDG1}
K.~G\"odel.
\newblock On the isometric embeddability of quadruples of points of $r_3$ in
  the surface of a sphere.
\newblock In S.~Feferman, J.~Dawson, S.~Kleene, G.~Moore, R.~Solovay, and
  J.~van Heijenoort, editors, {\em Kurt G\"odel: Collected Works, vol.~I},
  pages (1933b) 276--279. Oxford University Press, Oxford, 1986.

\bibitem{graverbook}
J.~Graver, B.~Servatius, and H.~Servatius.
\newblock {\em Combinatorial Rigidity}.
\newblock American Mathematical Society, 1993.

\bibitem{wuthrich}
T.~Havel and K.~W\"uthrich.
\newblock An evaluation of the combined use of nuclear magnetic resonance and
  distance geometry for the determination of protein conformations in solution.
\newblock {\em Journal of Molecular Biology}, 182(2):281--294, 1985.

\bibitem{tarskied}
L.~Henkin, P.~Suppes, and A.~Tarski, editors.
\newblock {\em The axiomatic method with special reference to geometry and
  physics}.
\newblock North-Holland, Amsterdam, 1959.

\bibitem{hilbert}
D.~Hilbert.
\newblock {\em Grundlagen der {G}eometrie}.
\newblock Teubner, Leipzig, 1903.

\bibitem{jllemma}
W.~Johnson and J.~Lindenstrauss.
\newblock Extensions of {L}ipschitz mappings into a {H}ilbert space.
\newblock In G.~Hedlund, editor, {\em Conference in Modern Analysis and
  Probability}, volume~26 of {\em Contemporary Mathematics}, pages 189--206,
  Providence, 1984. AMS.

\bibitem{dvop}
C.~Lavor, J.~Lee, A.~{Lee-St.~John}, L.~Liberti, A.~Mucherino, and
  M.~Sviridenko.
\newblock Discretization orders for distance geometry problems.
\newblock {\em Optimization Letters}, 6:783--796, 2012.

\bibitem{dgp-sirev}
L.~Liberti, C.~Lavor, N.~Maculan, and A.~Mucherino.
\newblock Euclidean distance geometry and applications.
\newblock {\em SIAM Review}, 56(1):3--69, 2014.

\bibitem{lyusternik}
L.A. Lyusternik.
\newblock {\em Convex figures and polyhedra}.
\newblock Heath \& C., Boston, 1966 (translated from Russian ed.~1956).

\bibitem{ye}
A.~{Man-Cho So} and Y.~Ye.
\newblock Theory of semidefinite programming for sensor network localization.
\newblock {\em Mathematical Programming B}, 109:367--384, 2007.

\bibitem{maxwell1864b}
J.~Maxwell.
\newblock On reciprocal figures and diagrams of forces.
\newblock {\em Philosophical Magazine}, 27(182):250--261, 1864.

\bibitem{menger28}
K.~Menger.
\newblock {U}ntersuchungen \"uber allgemeine {M}etrik.
\newblock {\em Mathematische Annalen}, 100:75--163, 1928.

\bibitem{menger31}
K.~Menger.
\newblock New foundation of {E}uclidean geometry.
\newblock {\em American Journal of Mathematics}, 53(4):721--745, 1931.

\bibitem{mengerS}
K.~Menger.
\newblock Sull'indirizzo di idee e sulle tendenze principali del colloquio
  matematico di {V}ienna.
\newblock {\em Annali di Pisa}, 4:1--13, 1935.

\bibitem{mengerK}
K.~Menger, editor.
\newblock {\em Ergebnisse eines {M}athematischen {K}olloquiums}.
\newblock Springer, Wien, 1998.

\bibitem{dgpbook}
A.~Mucherino, C.~Lavor, L.~Liberti, and N.~Maculan, editors.
\newblock {\em Distance Geometry: Theory, Methods, and Applications}.
\newblock Springer, New York, 2013.

\bibitem{heron}
Heron of~Alexandria.
\newblock {\em Metrica}, volume~I.
\newblock $\sim$\!100AD.

\bibitem{pak}
I.~Pak.
\newblock {\em Lectures on Discrete and Polyhedral Geometry}.
\newblock University of California, Los Angeles, 2010.

\bibitem{rojasthomas}
N.~Rojas and F.~Thomas.
\newblock Application of distance geometry to tracing coupler curves of
  pin-jointed linkages.
\newblock {\em Journal of Mechanisms and Robotics}, 5(2):021001, 2013.

\bibitem{schoenberg}
I.~Schoenberg.
\newblock Remarks to {M}aurice {F}r\'echet's article ``{S}ur la d\'efinition
  axiomatique d'une classe d'espaces distanci\'es vectoriellement applicable
  sur l'espace de {H}ilbert".
\newblock {\em Annals of Mathematics}, 36(3):724--732, 1935.

\bibitem{splines}
I.~Schoenberg.
\newblock Contributions to the problem of approximation of equidistant data by
  analytic functions.
\newblock {\em Quarterly of Applied Mathematics}, 4:(A:45--99,B:112--141),
  1946.

\bibitem{sippl}
M.~Sippl and H.~Scheraga.
\newblock Cayley-{M}enger coordinates.
\newblock {\em Proceedings of the National Academy of Sciences}, 83:2283--2287,
  1986.

\bibitem{sommerville}
D.~Sommerville.
\newblock {\em An introduction to the geometry of $N$ dimensions}.
\newblock Dover, New York, 1958.

\bibitem{stoker}
J.J. Stoker.
\newblock Geometrical problems concerning polyhedra in the large.
\newblock {\em Communications on pure and applied mathematics}, 21:119--168,
  1968.

\bibitem{varignon}
P.~Varignon.
\newblock {\em Nouvelle M\'ecanique}.
\newblock Claude Jombert, Paris, 1725.

\bibitem{mathematica}
S.~Wolfram.
\newblock {\em Mathematica}.
\newblock Wolfram Research Inc., Champaign, 2014.

\end{thebibliography}

\end{document}